\documentclass[12pt, leqno]{amsart}
\usepackage{graphicx}
\usepackage{amsfonts,delarray,amssymb,amsmath,amsthm,a4,a4wide}
\usepackage{latexsym}
\usepackage{epsfig}
\usepackage{color}

\vfuzz2pt 
\hfuzz2pt 
\newtheorem{thm}{Theorem}[section]

\newtheorem{lem}[thm]{Lemma}
\newtheorem{prop}[thm]{Proposition}
\theoremstyle{definition}

\newtheorem{rem}[thm]{Remark}
\numberwithin{equation}{section}
\newcommand{\norm}[1]{\left\Vert#1\right\Vert}
\newcommand{\abs}[1]{\left\vert#1\right\vert}

\newcommand{\R}{\mathbb R}

\newcommand{\eps}{\varepsilon}

\newcommand{\p}{\partial}

\newcommand{\comment}[1]{}
\def\h{\hspace*{.24in}}

\newenvironment{myindentpar}[1]%
{\begin{list}{}%
         {\setlength{\leftmargin}{#1}}%
         \item[]%
}
{\end{list}}

\begin{document}

\title[Boundary regularity for linearized Monge-Amp\`ere equations]{Boundary regularity for solutions to the linearized Monge-Amp\`ere equations}
\author{N. Q. Le}
\address{Department of Mathematics, Columbia University, New York, NY 10027}
\email{\tt  namle@math.columbia.edu}
\author{O. Savin}
\address{Department of Mathematics, Columbia University, New York, NY 10027}
\email{\tt  savin@math.columbia.edu}


\begin{abstract}
We obtain boundary H\"older gradient estimates and regularity  for solutions to the linearized Monge-Amp\`ere equations under natural assumptions 
on the domain, Monge-Amp\`ere measures and boundary data.  Our results are affine invariant analogues of the boundary H\"older gradient estimates of Krylov. 

\end{abstract}
\maketitle

\section{Introduction}
This paper is concerned with boundary regularity for solutions to the linearized Monge-Amp\`ere equations. The equations we are interested
in are of the form
$$L_u v =g,$$
with
$$L_u v:= \sum_{i, j=1}^{n} U^{ij} v_{ij},$$
where $u$ is a locally uniformly convex function and $U^{ij}$ is the cofactor of the Hessian $D^{2} u.$ The operator $L_u$ appears in
several contexts including affine differential geometry \cite{TW, TW1, TW2, TW3}, complex geometry \cite{D2}, and fluid mechanics \cite{B, CNP, Loe}. As $U = (U^{ij})$ is divergence-free, we can write
$$L_u v = \sum_{i, j}^{n} \p_{i} (U^{ij} D_{j} v)= \sum_{i, j=1}^{n}\p_{i}\p_{j} (U^{ij} v).$$
Because the matrix of cofactors $U$ is positive semi-definite, $L_{u}$ is a linear elliptic partial differential operator, possibly degenerate.

In \cite{CG}, Caffarelli and Guti\'errez developed a Harnack inequality theory for solutions of the homogeneous
equations $L_u v=0$ in terms of the pinching of the Hessian determinant
$$\lambda\leq \det D^{2} u\leq \Lambda.$$
This theory is an affine invariant version of the classical Harnack inequality for uniformly elliptic equations with measurable coefficients.

In this paper, we establish boundary H\"older gradient estimates and regularity for solutions to the linearized Monge-Amp\`ere equations
$L_u v =g$ under natural assumptions on the domain, Monge-Amp\`ere measures and boundary data; see Theorems \ref{main-reg}, \ref{main-reg-gl} and \ref{main-reg-gl2}. These theorems are affine invariant analogues of the boundary H\"older gradient estimates of Krylov \cite{K}. 

The motivation for our estimates comes from the study of convex minimizers $u$ for convex energies $E$ of the type
$$E(u) =\int_{\Omega} F(\det D^{2} u ) \, dx + \int_{\p\Omega} u d\sigma -\int_{\Omega} u dA,$$
which we considered in \cite{LS2}. Such energies appear in the work of Donaldson \cite{D1}-\cite{D4} in the context of existence of K\"{a}hler metrics of constant scalar curvature for toric varieties. Minimizers of $E$ satisfy a system of the form
\begin{equation}\label{EL-intro}
 \left\{
 \begin{alignedat}{2}
   -F'(\det D^{2} u) ~&=v \h~&&\text{in} ~\Omega, \\\
 U^{ij} v_{ij}&= -dA \h~&&\text{in}~ \Omega,\\\
v &=0\h~&&\text{on}~\p \Omega,\\\
U^{\nu \nu} v_{\nu} &= -\sigma~&&\text{on}~\p \Omega,
 \end{alignedat}
 \right.
\end{equation}
where $U^{\nu\nu} = \det D^{2}_{x^{'}} u$ with $x' \perp \nu$ denoting the tangential directions along $\p \Omega$. The minimizer $u$ solves a fourth order elliptic equation with two nonstandard boundary conditions involving the second and third order derivatives of $u$. In \cite{LS2} we apply the boundary H\"older gradient estimates established in this paper and show that $u\in C^{2,\alpha}(\overline{\Omega})$ in dimensions $n=2$ under suitable conditions on the function $F$ and the measures $dA$ and $ d\sigma.$

Our boundary H\"older gradient estimates depend only on the bounds on the Hessian determinant $\det D^{2} u$, the quadratic separations of
$u$ from its tangent planes on the boundary $\p\Omega$ and the geometry of $\Omega$. Under these assumptions, the linearized Monge-Amp\`ere operator $L_u$ is in general not uniformly elliptic, i.e., the eigenvalues of $U = (U^{ij})$ are not necessarily bounded away from $0$ and $\infty.$ Moreover, $L_u$ can be possibly singular near the boundary; even if $\det D^{2} u$ is constant in $\overline{\Omega}$, $U$ can blow up logarithmically at the boundary, see Proposition \ref{u_reg}. The degeneracy and singularity of $L_u$ are the main difficulties in establishing our boundary regularity results. We handle the degeneracy of $L_u$ by working as in \cite{CG} with sections of solutions to the Monge-Amp\`ere equations. These sections have the same role as euclidean balls have in the classical theory. To overcome the singularity of $L_u$ near the boundary, we use a Localization Theorem at the boundary for solutions to the Monge-Amp\`ere equations which was obtained in \cite{S,S2}.  \\

The rest of the paper is organized as follows. We state our main results in Section \ref{results}. In Section 3, we discuss the Localization Theorem and weak Harnack inequality, which are the main tools used in the proof of our local boundary regularity result, Theorem \ref{main-reg}. In Sections \ref{boundary-sec} and \ref{property-sec}, we study boundary behavior and the main properties of the rescaled functions $u_h$ obtained from the Localization Theorem. The proofs of Theorems \ref{main-reg} and \ref{main-reg-gl2} will be given in Section \ref{proof-sec} and Section 7.

\section{Statement of the main results}\label{results}

Let $\Omega\subset \R^{n}$ be a bounded convex set with
\begin{equation}\label{om_ass}
B_\rho(\rho e_n) \subset \, \Omega \, \subset \{x_n \geq 0\} \cap B_{\frac 1\rho},
\end{equation}
for some small $\rho>0$. Assume that 
\begin{equation}
\Omega~ \text{contains an interior ball of radius $\rho$ tangent to}~ \p 
\Omega~ \text{at each point on} ~\p \Omega\cap\ B_\rho.
\label{tang-int}
\end{equation}

Let $u : \overline \Omega \rightarrow \R$, $u \in C^{0,1}(\overline 
\Omega) 
\cap 
C^2(\Omega)$  be a convex function satisfying
\begin{equation}\label{eq_u}
\det D^2u =f, \quad \quad 0 <\lambda \leq f \leq \Lambda \quad \text{in $\Omega$}.
\end{equation}
Throughout, we denote by $U = (U^{ij})$ the matrix of cofactors of the 
Hessian matrix $D^{2}u$, 
i.e., $$U = (\det D^{2} u) (D^{2} u)^{-1}.$$
We assume that on $\p \Omega\cap B_\rho$, 
$u$ separates quadratically from its tangent planes on $\p \Omega$. 
Precisely we assume that if $x_0 \in 
\p \Omega \cap B_\rho$ then
\begin{equation}
 \rho\abs{x-x_{0}}^2 \leq u(x)- u(x_{0})-\nabla u(x_{0}) (x- x_{0}) \leq 
\rho^{-1}\abs{x-x_{0}}^2,
\label{eq_u1}
\end{equation}
for all $x \in \p\Omega.$ 

When $x_0 \in \p \Omega,$ the term $\nabla u(x_0)$ is understood in the 
sense that
$$ x_{n+1}=u(x_0)+\nabla u(x_0) \cdot (x-x_0) $$ is a supporting 
hyperplane for the graph of $u$ but for any $\eps >0$,
$$ x_{n+1}=u(x_0)+(\nabla u(x_0)- \eps \nu_{x_0}) \cdot (x-x_0) $$
is not a supporting hyperplane, where $\nu_{x_0}$ denotes the exterior 
unit normal to $\p \Omega$ at $x_0$. In fact we will show in Proposition 
\ref{U-bound-grad2} 
that our hypotheses imply that $u$ is 
always differentiable at $x_0$ and then $\nabla u(x_0)$ is defined also in 
the classical sense.

\

We are ready to state our main theorem.

\begin{thm}
Assume $u$ and $\Omega$ satisfy the assumptions 
\eqref{om_ass}-\eqref{eq_u1} 
above. Let $v: B_{\rho}\cap 
\overline{\Omega}\rightarrow \R$ be a continuous solution to 
\begin{equation*}
 \left\{
 \begin{alignedat}{2}
   U^{ij}v_{ij} ~& = g \h~&&\text{in} ~ B_{\rho}\cap \Omega, \\\
v &= 0\h~&&\text{on}~\p \Omega \cap B_{\rho},
 \end{alignedat} 
  \right.
\end{equation*} 
Then
\begin{equation*}
 \norm{v_{\nu}}_{C^{0, \alpha} (\p \Omega\cap B_{\rho/2})} 
 \leq C \left (\norm{v}_{L^{\infty}(\Omega\cap B_{\rho})}+
 \|g/ \, tr \, U\|_{L^\infty(\Omega \cap B_\rho)} \right),
\end{equation*}
and, for $r\leq \rho/2$, we have the estimate
\begin{equation*}
 \max_{B_{r}\cap \overline{\Omega}}\abs{v + v_{\nu}(0)x_{n}} \leq C r^{1 
+\alpha} \left (\norm{v}_{L^{\infty}(\Omega\cap B_{\rho})}+
 \|g/ \, tr \, U\|_{L^\infty(\Omega \cap B_\rho)} \right),
\end{equation*}
where $\alpha\in(0, 1)$ and $C$ are constants depending only on $n, \rho,
 \lambda, \Lambda $.
\label{main-reg}
\end{thm}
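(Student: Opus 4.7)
The plan is to establish the second, pointwise $C^{1,\alpha}$ expansion at the boundary point $0$, noting that the hypotheses \eqref{om_ass}--\eqref{eq_u1} are localizable and preserved under translation along $\partial\Omega \cap B_{\rho/2}$, so the same argument gives the analogous expansion at every $x_0 \in \partial\Omega \cap B_{\rho/2}$; comparing two such expansions in the standard way then yields the $C^{0,\alpha}$ estimate on $v_\nu$. By linearity in $v$ and in $g$ we normalize so that $\|v\|_{L^\infty(\Omega\cap B_\rho)} + \|g/\operatorname{tr} U\|_{L^\infty(\Omega\cap B_\rho)} \leq 1$. The pointwise expansion will come from a dyadic improvement-of-flatness iteration producing slopes $a_k \in \R$ and universal constants $\mu \in (0, 1/2)$, $\alpha \in (0,1)$, $C$ depending only on $n, \rho, \lambda, \Lambda$ such that
$$|v(x) - a_k x_n| \leq \mu^{k(1+\alpha)} \text{ on } \ov\Omega \cap B_{\mu^k}, \qquad |a_{k+1} - a_k| \leq C \mu^{k\alpha}.$$
Then $\{a_k\}$ is Cauchy, its limit is $v_\nu(0)$, and both displayed estimates in the theorem follow.

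To carry out the inductive step at level $k$, the decisive tool is the Localization Theorem of \cite{S, S2}, which at $0$ provides, for each small height $h$, an affine transformation $T_h$ with $\det T_h = 1$ sending a normalized convex set to the section $S_h(u)(0)$. At height $h_k \sim \mu^{2k}$ we rescale $u$ to $u_k(y) = h_k^{-1}[u(T_{h_k} y) - \ell_0(T_{h_k} y)]$, where $\ell_0$ is the supporting plane at $0$; then $u_k$ has $\det D^2 u_k \in [\lambda, \Lambda]$ on a normalized convex domain $\Omega_k$ with quadratic boundary separation, using the properties of the rescalings developed in Sections \ref{boundary-sec}--\ref{property-sec}. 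Rescaling $v$ accordingly, $v_k(y) = \mu^{-k(1+\alpha)} (v - a_k x_n)(T_{h_k} y)$ is bounded by $1$ in $\Omega_k$, is $O(\mu^{k(1-\alpha)})$ on the image of $\partial\Omega \cap B_{\mu^k}$ (since $|x_n| \lesssim |x'|^2$ there by \eqref{eq_u1}), and solves a linearized Monge-Amp\`ere equation for $u_k$ whose right-hand side, relative to $\operatorname{tr} U_k$, is small. Applying the Caffarelli--Guti\'errez weak Harnack inequality (Section 3) to $1 - v_k$ and $1 + v_k$ on an interior section of $u_k$, then choosing the optimal linear correction $c\, y_n$, yields $\operatorname{osc}(v_k - c y_n) \leq \mu^{1+\alpha}$ on a smaller section for $\mu$ sufficiently small; undoing the rescaling produces $a_{k+1} = a_k + \mu^{k\alpha} \tilde c$ and closes the induction.

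The main obstacle is exactly the degeneracy and possible singularity of $L_u$ at $\partial\Omega$ flagged in the introduction: $U^{ij}$ need not be bounded, and can blow up logarithmically at the boundary, so the standard Krylov-type barrier argument in Euclidean balls does not apply. The fix is to replace Euclidean balls by $u$-adapted sections and to use the affine-invariant geometry produced by the Localization Theorem. Two delicate points require the preparatory material of the paper: first, that the barrier $x_n$ rescales to a controlled linear function in the normalized picture, which uses the quadratic boundary separation \eqref{eq_u1} and the differentiability of $u$ at $\partial\Omega$ supplied by Proposition \ref{U-bound-grad2}; second, that $\operatorname{tr} U$ transforms so that the rescaled right-hand side in the linearized equation is summably small in $k$. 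Both are exactly the quantitative uniform-in-$k$ facts that Sections \ref{boundary-sec}--\ref{property-sec} are designed to provide, and this uniformity is what fixes the H\"older exponent $\alpha$ in the end.
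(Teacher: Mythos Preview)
Your overall strategy---normalize, rescale via the Localization Theorem into a class with uniform geometry, iterate an oscillation-decay estimate, and sum---matches the paper's. The gap is in the inductive step. Weak Harnack applied to $1\pm v_k$ on an \emph{interior} section of $u_k$ gives you only that $v_k$ cannot be too close to $\pm 1$ on a compactly contained piece of $\Omega_k$; it does not by itself produce an improved bound of the form $|v_k-c\,y_n|\le \mu^{1+\alpha}$ on a region that \emph{reaches the boundary}, which is what you must have to iterate. The missing ingredient is a barrier that propagates the interior improvement down to $\partial\Omega_k$: in the paper this is the explicit subsolution of Lemma~\ref{key-lem}, $\underline w = x_n - u + \delta^{1/(n-1)}|x'|^2 + C\delta^{-1} x_n^2$, together with Proposition~\ref{key-thm}, which show that a lower bound for $v_1:=(v-a\,d_G)/(b-a)$ on the interior slab $F_\delta$ forces $v_1\ge \tfrac12 d_G$ all the way to the boundary in $S_\theta$. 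Your phrase ``choosing the optimal linear correction $c\,y_n$'' hides exactly this step, and the construction of $\underline w$---which uses $u$ itself, not just the ambient geometry---is the heart of the argument and not something the weak Harnack inequality supplies.

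A secondary point: the paper iterates on sections $S_h$ and compares $v$ to $a_h\,d_{\partial\Omega}$ and $b_h\,d_{\partial\Omega}$, rather than to $a_k x_n$ on Euclidean balls $B_{\mu^k}$. Since $S_h$ is an ellipsoid of eccentricity $\sim|\log h|$, the set $S_h$ and the ball $B_{h^{1/2}}$ differ by logarithmic factors, and Proposition~\ref{bdr-distort} is needed to track how $d_{\partial\Omega}$ transforms under $A_h$. These log losses are precisely why the paper carries the auxiliary inequality $b_h-a_h\ge C_1 h^{1/2}|\log h|^2$ through the induction (see \eqref{b-a}). Running the iteration on Euclidean balls as you propose would force you to absorb these logarithms at each step; this can be done, but it is not automatic and you have not indicated how.
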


 We remark that our estimates do not depend on the 
$C^{0,1}(\overline{\Omega})$ norm of $u$ or the smoothness of 
$u$. 

\begin{rem} 
The theorem is still valid if we consider the equation
$$ tr \, (A D^2v)=g, \quad \quad \mbox{ with}   \quad 0<\tilde \lambda U \le A \le \tilde \Lambda U$$
and then the constants $ \alpha$, $C$ depend also on $\tilde \lambda$, $\tilde \Lambda$.
\label{rem-big}
\end{rem}

Theorem \ref{main-reg} is concerned with boundary regularity in the case when the 
potential $u$ is nondegenerate along $\p \Omega$. It is an affine 
invariant 
analogue of the boundary H\"older gradient estimate of Krylov \cite{K}.  

\begin{thm}[Krylov]
 Let $w\in C(\overline{B_1^+}) \cap C^2(B_1^+)$ 
satisfy $$L w=f \quad \mbox{in $B_1^+$}, \quad \quad w=0 \quad \mbox{on 
$\{x_n=0\}$},$$
where $L=a^{ij} \p_{ij}$ is a uniformly elliptic operator with bounded 
measurable 
coefficients with ellipticity constants $\lambda, \Lambda.$ Then
there are constants $0<\alpha<1$ and $C>0$ depending on $\lambda$, 
$\Lambda$, $n$ such that
\begin{equation*}
 \norm{ w_n}_{C^{\alpha}(B_{1/2} \cap \{x_n=0\})}\leq 
C(\|w\|_{L^\infty(B_1^+)} + \|f\|_{L^\infty(B_1^+)}).
\end{equation*}  
\end{thm}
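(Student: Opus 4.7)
\medskip

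My plan is to prove a pointwise $C^{1,\alpha}$ expansion of $w$ at every boundary point $y \in B_{1/2}\cap\{x_n=0\}$ with uniform constants, and then obtain the Hölder estimate for $w_n$ by comparing expansions at nearby boundary points. After subtracting a suitable multiple of the barrier handling the inhomogeneity (the function $\psi(x)=\tfrac{1}{2\lambda}\|f\|_\infty(1-|x|^2)$ satisfies $L\psi\le -\|f\|_\infty$), I may assume $f\equiv 0$ and $\|w\|_\infty\le 1$. By translating, it suffices to work at $y=0$.

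The first step is the \emph{linear growth bound}: $|w(x)|\le C x_n$ in $B_{3/4}^+$. To prove this I compare $w$ with the barrier $\Phi(x)=A x_n-B(|x'|^2-\kappa(x_n-x_n^2))$ for suitable constants so that $L\Phi\le 0$ in $B_1^+$ and $\Phi\ge |w|$ on $\partial B_1^+$; the maximum principle (or ABP) then gives $|w|\le \Phi\le C x_n$ near $\{x_n=0\}\cap B_{3/4}$. In particular $w/x_n$ is bounded, with norm controlled by $\|w\|_\infty+\|f\|_\infty$.

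The main step is \emph{oscillation decay} for $w/x_n$ at $0$: I will construct numbers $m_k\le M_k$ and a radius $r_k=\theta_0^{-k}$ with
\[
 m_k x_n \;\le\; w(x)\;\le\; M_k x_n \qquad \text{on } B_{r_k}^+,
\]
and $M_k-m_k\le \mu^k(M_0-m_0)$ for some $\mu<1$. The inductive step works as follows. Rescale: set $\tilde w(x)=(w(r_{k-1}x)-m_{k-1}r_{k-1}x_n)/(r_{k-1}(M_{k-1}-m_{k-1}))$, which is nonnegative in $B_1^+$, vanishes on $\{x_n=0\}\cap B_1$, and solves $\tilde L\tilde w=0$ with the same ellipticity constants. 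Since $L x_n=0$, both $\tilde w$ and $x_n-\tilde w$ are nonnegative solutions. The key dichotomy is: either $\tilde w\ge \tfrac12 x_n$ on a set $E\subset B_{1/2}^+\cap\{x_n\ge 1/4\}$ of measure at least $\tfrac12|B_{1/2}^+\cap\{x_n\ge 1/4\}|$, or $x_n-\tilde w\ge \tfrac12 x_n$ on such a set. Applying the interior weak Harnack inequality of Krylov–Safonov to the relevant function on $B_{3/4}^+\cap\{x_n\ge 1/8\}$ yields a pointwise lower bound of order a small constant on $B_{1/8}^+\cap\{x_n=1/8\}$; combining with the linear growth bound from Step 1 (applied to the nonnegative solution, rescaled) propagates this into the estimate $\tilde w\ge c x_n$ (or $x_n-\tilde w\ge c x_n$) on $B_{r_0}^+$ for a fixed $r_0<1$. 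This shrinks the oscillation of $w/x_n$ by a factor $1-c$, completing the induction with $\mu=1-c$ and $\theta_0=1/r_0$.

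The oscillation decay yields numbers $a=\lim a_k$ where $a_k=\tfrac12(m_k+M_k)$, satisfying $|w(x)-ax_n|\le Cr^{1+\alpha}$ on $B_r^+$ for $r\le 1/2$, with $\alpha=\log_{\theta_0}(1/\mu)$; setting $w_n(0):=a$ gives the Hölder expansion. Applying this at any two boundary points $y,z\in B_{1/2}\cap\{x_n=0\}$ and evaluating both expansions at a test point with $x_n\sim|y-z|$ yields $|w_n(y)-w_n(z)|\le C|y-z|^\alpha$. The main obstacle is the dichotomy–weak Harnack iteration in the third paragraph, where one has to be careful that the weak Harnack (an interior statement) can be applied in a fixed subdomain bounded away from $\{x_n=0\}$, and that the resulting lower bound can be extended down to the boundary using the linear growth of Step 1; this is precisely where the vanishing boundary condition and the fact that $L x_n=0$ are used in tandem.
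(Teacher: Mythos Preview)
The paper does not itself prove this statement: it is Krylov's classical boundary estimate, quoted with citation to~[K] in order to frame Theorem~2.1 as its affine-invariant analogue for the linearized Monge--Amp\`ere operator. Your outline is the standard Krylov scheme and is correct in strategy; in fact it is precisely the template the paper follows in proving Theorem~2.1, with Euclidean balls replaced by sections $S_h(u)$, Krylov--Safonov weak Harnack replaced by the Caffarelli--Guti\'errez weak Harnack (Theorem~3.3), and your Step-1/Step-3 barriers replaced respectively by the supersolution of Lemma~6.2 and the subsolution of Lemma~5.2 (the latter feeding into Proposition~5.7, which is exactly your ``propagation down to the boundary'').

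Two details in your write-up should be tightened. Your Step-1 barrier $\Phi(x)=Ax_n-B\bigl(|x'|^2-\kappa(x_n-x_n^2)\bigr)$ equals $-B|x'|^2<0$ on $\{x_n=0,\ x'\ne 0\}$, so it does not dominate $|w|=0$ there; the supersolution must be nonnegative on the flat part of $\p B_1^+$ (compare the structure of $\overline w$ in Lemma~6.2). And the propagation step after weak Harnack is not a consequence of the \emph{upper} linear-growth bound of Step~1 but of a separate \emph{subsolution} comparison on a thin slab, turning $\tilde w\ge c$ on a set at height $\sim\delta$ into $\tilde w\ge c'\,x_n$ near the origin; this is the uniformly-elliptic analogue of Lemma~5.2 and Proposition~5.7, and it is where $Lx_n=0$ and the vanishing boundary data are genuinely used. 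With these two adjustments your sketch is complete.
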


 We also obtain global boundary regularity estimates 
under global conditions on the domain $\Omega$ and the
potential function $u$.

\begin{thm}
Assume that 
$\Omega \subset B_{1/\rho}$ contains an interior ball of radius $\rho$ 
tangent to $\p 
\Omega$ at each point on $\p \Omega.$
Assume further that $$ \det D^2 u =f \quad \quad \mbox{with} \quad \lambda \le f \le \Lambda,$$ and on $\p \Omega$, $u$ 
separates quadratically from its 
tangent planes,  namely
\begin{equation*}
 \rho\abs{x-x_{0}}^2 \leq u(x)- u(x_{0})-\nabla u(x_{0}) (x- x_{0})
 \leq \rho^{-1}\abs{x-x_{0}}^2, ~\forall x, x_{0}\in\p\Omega.
\end{equation*}
Let $v: \overline{\Omega}\rightarrow \R$ be a continuous function that 
solves  
\begin{equation*}
 \left\{
 \begin{alignedat}{2}
   U^{ij}v_{ij} ~& = g \h~&&\text{in} ~ \Omega, \\\
v &= \varphi \h~&&\text{on}~\p \Omega,
 \end{alignedat} 
  \right.
\end{equation*}
where $\varphi$ is a $C^{1,1}$ function defined on $\p\Omega$. 
Then
\begin{equation*}
 \norm{v_{\nu}}_{C^{0, \alpha} (\p \Omega )} \leq C\left(\|\varphi\|_{C^{1,1}(\p \Omega)} +\|g/ \, tr \, U\|_{L^\infty(\Omega)} \right), 
\end{equation*}
and for all $x_0 \in \p \Omega$
\begin{equation*}
 \max_{B_{r}(x_{0})\cap \overline \Omega}\abs{v- v(x_{0})- \nabla v(x_{0})(x - 
x_{0})} \leq C \left (
\|\varphi\|_{C^{1,1}(\p \Omega)} +\|g/ \, tr \, U\|_{L^\infty(\Omega)} \right) r^{1 +\alpha}, 
\end{equation*}
where $\alpha\in(0, 1)$ and $C$ are constants depending on 
$n, \rho, \lambda, \Lambda$.\\
\label{main-reg-gl}
\end{thm}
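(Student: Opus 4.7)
The plan is to reduce Theorem \ref{main-reg-gl} to finitely many applications of the local Theorem \ref{main-reg} after subtracting a global $C^{1,1}$ extension of $\varphi$. For each $x_0\in\p\Omega$ I apply a rigid motion sending $x_0$ to the origin and the interior unit normal to $e_n$. The global hypotheses (interior ball of radius $\rho$ tangent at every boundary point, and quadratic separation at every boundary point) are rigid-motion invariant and yield exactly the local assumptions \eqref{om_ass}, \eqref{tang-int}, \eqref{eq_u1} at the origin with the same $\rho$; the determinant bounds and the ratio $g/\text{tr}\,U$ are likewise preserved. Consequently the constants produced by the local theorem will be uniform in $x_0\in\p\Omega$.

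Next I let $\Phi\in C^{1,1}(\R^n)$ be a global extension of $\varphi$ with $\|\Phi\|_{C^{1,1}(\R^n)}\le C\|\varphi\|_{C^{1,1}(\p\Omega)}$ — this exists because $\p\Omega$ is a $C^{1,1}$ hypersurface (convexity together with the uniform interior ball hypothesis forces two-sided tangent balls). Setting $w:=v-\Phi$ one has
\[
U^{ij}w_{ij}=g-U^{ij}\Phi_{ij}=:\tilde g \ \text{ in }\Omega,\qquad w=0\ \text{ on }\p\Omega,
\]
and $|U^{ij}\Phi_{ij}|\le \|D^2\Phi\|_\infty\,\text{tr}\,U$ (since $U\ge 0$) yields $\|\tilde g/\text{tr}\,U\|_{L^\infty(\Omega)}\le K$, where $K:=\|g/\text{tr}\,U\|_{L^\infty(\Omega)}+C\|\varphi\|_{C^{1,1}(\p\Omega)}$. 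Using $U^{ij}(|x|^2)_{ij}=2\,\text{tr}\,U$, the quadratic barrier $\pm K(\rho^{-2}-|x|^2)/2$ and the maximum principle give $\|w\|_{L^\infty(\Omega)}\le CK$.

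Applying Theorem \ref{main-reg} to $w$ at the origin (in the rotated coordinates) yields the $C^{0,\alpha}$ estimate for $w_\nu$ on $\p\Omega\cap B_{\rho/2}(x_0)$ and the pointwise bound
\[
\max_{B_r(x_0)\cap\overline\Omega}\bigl|w(x)+w_\nu(x_0)\,(x-x_0)\cdot e_n\bigr|\le C K\, r^{1+\alpha},\qquad r\le\rho/2.
\]
Since $w\equiv 0$ on $\p\Omega$ near $x_0$, the tangential derivatives of $w$ at $x_0$ vanish, so the left-hand side equals $|w(x)-\nabla w(x_0)(x-x_0)|$. Writing $v=w+\Phi$ and using the $C^{1,1}$ Taylor bound $|\Phi(x)-\Phi(x_0)-\nabla\Phi(x_0)(x-x_0)|\le C\|\varphi\|_{C^{1,1}}r^2\le CKr^{1+\alpha}$ (valid since $\alpha<1$ and $r\le\rho/2$) gives the second estimate. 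The first estimate follows by patching the local $C^{0,\alpha}$ bounds on $w_\nu$ over a finite cover of the compact $\p\Omega$ by balls $B_{\rho/2}(x_i)$ and adding the globally Lipschitz contribution of $\Phi_\nu$; Hölder continuity for far-apart pairs on $\p\Omega$ is handled trivially using the $L^\infty$ bound on $w_\nu$.

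The one place where the argument leaves pure bookkeeping is the construction of $\Phi$ with controlled $C^{1,1}$ norm; this rests on verifying the $C^{1,1}$ regularity of $\p\Omega$, which follows from convexity plus the uniform interior ball condition (giving two-sided osculating balls at every boundary point, with constants depending only on $\rho$). Everything else is a direct reduction to the local theorem.
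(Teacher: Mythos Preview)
Your proposal is correct and follows essentially the same route as the paper: bound $v$ (or equivalently $w=v-\Phi$) by the quadratic barriers $\pm C(|x|^2-2/\rho^2)$, subtract a $C^{1,1}$ extension of $\varphi$, and apply the local Theorem \ref{main-reg} at each boundary point. You have simply written out the details the paper leaves implicit---the rigid motion, the vanishing of tangential derivatives of $w$, the patching over a finite cover, and the observation that convexity plus the uniform interior ball condition forces $\p\Omega\in C^{1,1}$ so that the extension $\Phi$ exists with controlled norm.
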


Theorem \ref{main-reg-gl} follows easily from Theorem \ref{main-reg}. Indeed, first we notice that $v$ is bounded by the use of barriers $$\pm C(|x|^2-2/\rho^2),$$ for appropriate $C$, and then we  apply Theorem \ref{main-reg} on $\p \Omega$ for $\tilde v:=v- \varphi$, where $\varphi$ is a $C^{1,1}$ extension of $\varphi$ to $\overline \Omega$. 

If, in addition, we assume that $\det D^{2} u$ is globally H\"older continuous, then the solutions to the linearized Monge-Amp\`ere equations have global $C^{1,\alpha}$ estimates as stated in the next theorem.
\begin{thm}
Assume the hypotheses of Theorem \ref{main-reg-gl} hold and $f\in C^{\beta}(\overline{\Omega})$ for some $\beta>0$.
Then
\begin{equation*}
 \norm{v}_{C^{1, \alpha} (\overline \Omega )} \leq K( \|\varphi\|_{C^{1,1}(\p \Omega)}+ \|g/ \, tr \, U\|_{L^\infty(\Omega)}), 
\end{equation*}
with  $K$ a constants depending on 
$n, \beta, \rho, \lambda, \Lambda$ and $\|f\|_{C^{\beta}(\overline{\Omega})}$.
 \label{main-reg-gl2}
\end{thm}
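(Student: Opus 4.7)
The plan is to combine the boundary pointwise expansion provided by Theorem~\ref{main-reg-gl} with interior H\"older estimates for $\nabla v$ that become available once the extra hypothesis $f\in C^{\beta}(\overline\Omega)$ is used to upgrade the regularity of $u$. By Caffarelli's interior $C^{2,\gamma}$ theory for the Monge-Amp\`ere equation, $f\in C^{\beta}$ implies $u\in C^{2,\gamma}_{\mathrm{loc}}(\Omega)$, hence the cofactor matrix $U$ is H\"older continuous on compact subsets of $\Omega$. Because $L_u$ is affine invariant, the natural way to quantify this interior regularity uniformly is to work on sections $S_h(x)$ of $u$: after the John-lemma normalization that sends $S_h(x)$ to a set comparable to $B_1$, the rescaled potential belongs to $C^{2,\gamma}$ with universal bounds, the rescaled cofactor matrix is uniformly elliptic with H\"older coefficients, and classical interior Schauder theory applied to the rescaled linearized equation yields $C^{2,\alpha}$ (hence $C^{1,\alpha}$) bounds for the rescaled $v$.

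To produce a global estimate I would, for each interior point $x\in\Omega$, pick a nearest boundary point $x_0\in\p\Omega$ and apply the Schauder bound of the previous paragraph not to $v$ itself but to
\[
w(y):=v(y)-v(x_0)-\nabla v(x_0)\cdot(y-x_0),
\]
which solves the same linearized equation because the subtracted piece is affine. Theorem~\ref{main-reg-gl} supplies
\[
\|w\|_{L^{\infty}(B_{r}(x_0)\cap\overline\Omega)}\leq C\,M\, r^{1+\alpha},\qquad M:=\|\varphi\|_{C^{1,1}(\p\Omega)}+\|g/ \, tr \, U\|_{L^{\infty}(\Omega)}.
\]
Choosing $r$ comparable to $\mathrm{dist}(x,\p\Omega)$ and using the boundary Localization Theorem to see that an appropriate section $S_h(x)$ sits inside $B_r(x_0)\cap\overline\Omega$ and is the affine image of a set comparable to $B_1$ with normalization matrix of size $r$, the rescaled Schauder estimate applied to $w$ produces $[\nabla w]_{C^{0,\alpha}}$-control on a neighborhood of $x$ that, after undoing the rescaling, converts into $[\nabla v]_{C^{0,\alpha}}\leq C\,M$ with a constant independent of $\mathrm{dist}(x,\p\Omega)$.

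Finally, the uniform interior H\"older control on $\nabla v$ from the previous step, combined with the $C^{0,\alpha}$ modulus of $\nabla v$ along $\p\Omega$ from Theorem~\ref{main-reg-gl} and the pointwise $C^{1,\alpha}$ expansion of $v$ at each boundary point, yields boundary-to-boundary, interior-to-interior, and boundary-to-nearby-interior comparisons of $\nabla v$ that together imply $\nabla v\in C^{0,\alpha}(\overline\Omega)$ with the asserted bound.

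The main obstacle is that $L_u$ degenerates at $\p\Omega$: by Proposition~\ref{u_reg}, the cofactor matrix can blow up logarithmically at the boundary, so one cannot apply Euclidean Schauder estimates up to $\p\Omega$. The correct scaling is the one coming from the sections of $u$ and the boundary Localization Theorem, and the delicate point is to verify that, in the affine-normalized picture near a boundary point, both the H\"older continuity of $U$ and its uniform ellipticity are preserved with universal constants, so that the interior Schauder bound scales correctly and the powers of $\mathrm{dist}(x,\p\Omega)$ cancel against the $r^{1+\alpha}$ control on $w$ furnished by Theorem~\ref{main-reg-gl}.
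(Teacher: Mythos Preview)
Your proposal is essentially the paper's own argument: subtract the boundary affine approximation $v(x_0)+\nabla v(x_0)\cdot(x-x_0)$, use Theorem~\ref{main-reg-gl} to bound the remainder by $Cr^{1+\alpha}$, rescale via the Localization Theorem so that the rescaled potential has uniformly bounded $D^2$ (this is where $f\in C^\beta$ enters, via Caffarelli's interior $C^{2,\gamma}$ estimates), apply interior Schauder, and rescale back. The only minor discrepancy is that the paper anchors the rescaling at the point $x_0\in\partial\Omega$ where the \emph{maximal} section $S_{y,\bar h}$ is tangent (rather than the nearest boundary point), and is explicit that the normalization $A_h$ carries logarithmic factors which are absorbed by taking a smaller final H\"older exponent; you flag this as the ``delicate point'' without writing it out, but the strategy is the same.
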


Finally we mention also the regularity properties of the potentials $u$ that satisfy our hypotheses.

\begin{prop}\label{u_reg}
If $u$ satisfies the hypotheses of Theorem \ref{main-reg-gl} then $$[\nabla u]_{C^{\alpha}(\overline \Omega)} \le C.$$ If in addition $f \in C^\beta(\overline \Omega)$ then
$$\|D^2 u\| \le K |\log \eps|^2 \quad \mbox{on} \quad \Omega_\eps = \{x\in\Omega, 
dist(x,\p\Omega)>\eps\},$$
where $K$ is a constant depending on 
$n, \beta, \rho, \lambda, \Lambda$ and $\|f\|_{C^{\beta}(\overline{\Omega})}$.
\end{prop}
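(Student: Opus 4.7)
The plan is to combine Caffarelli's interior regularity theory for the Monge-Amp\`ere equation with the boundary Localization Theorem from \cite{S, S2}, which is the tool for transferring interior estimates on rescaled sections back to the original geometry near $\p \Omega$.

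For the first bound $[\nabla u]_{C^\alpha(\overline \Omega)} \le C$, the interior $C^{1,\alpha}$ regularity of $u$ on compact subsets of $\Omega$ is Caffarelli's classical estimate for strictly convex solutions of $\det D^{2}u = f$ with $f$ pinched between $\lambda$ and $\Lambda$. What remains is regularity up to $\p \Omega$. For $x_0 \in \p \Omega$, I would consider the boundary section of $u$ of height $h$ centered at $x_0$ and invoke the Localization Theorem to produce an affine map $T_h$ with $\det T_h \sim h^{-n/2}$ that normalizes this section to a set of bounded eccentricity comparable to a half-ball. After rescaling $u$ to a function $\tilde u$ on the normalized section, $\tilde u$ is convex with $\lambda \le \det D^{2}\tilde u \le \Lambda$ and vanishes on the flat part. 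Caffarelli's interior $C^{1,\alpha}$ estimate applied to $\tilde u$ near the origin, followed by unwinding the rescaling, yields the pointwise H\"older bound $|\nabla u(x) - \nabla u(x_0)| \le C|x - x_0|^\alpha$. A standard covering argument that combines this boundary estimate with the interior one then gives the global bound.

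For the second bound, the stronger hypothesis $f \in C^{\beta}(\overline \Omega)$ activates Caffarelli's interior $C^{2,\alpha}$ estimate, which applies to the rescaled function $\tilde u$ (its right-hand side also lies in $C^\beta$ with a controlled norm). Given $x \in \Omega_\eps$, take $x_0 \in \p\Omega$ to be the closest boundary point and choose the section height $h$ so that $x$ becomes a definite interior point of the normalized section. Since the boundary sections are comparable (modulo a shear) to half-balls of radius $h^{1/2}$, this forces $h \sim \eps^2$. The key quantitative input from the Localization Theorem is that, although $\det T_h \sim h^{-n/2}$ so $T_h$ is natively of scale $h^{-1/2}$, it carries an unavoidable shear whose tangential-to-normal entries grow like $|\log h|$; consequently $\|T_h\| \le C\, h^{-1/2}|\log h|$. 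Interior $C^{2,\alpha}$ control gives $\|D^{2}\tilde u(T_h x)\| \le C$, and the change-of-variables identity $D^{2}u(x) = h\,T_h^{T}\,D^{2}\tilde u(T_h x)\,T_h$ then yields
\[
\|D^{2}u(x)\| \le C\, h\, \|T_h\|^{2} \le C\, \eps^{2}\cdot \eps^{-2}|\log \eps|^{2} = K\,|\log \eps|^{2}.
\]

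The principal obstacle will be the quantitative control of $T_h$: isolating the unavoidable shear component and verifying that its size grows at most logarithmically in $h$. This shear is precisely the source of the logarithmic blow-up announced in the introduction---without it, the eigenvalues of $D^{2}u$ near $\p \Omega$ would remain bounded. Apart from this careful accounting, the argument reduces to standard interior Monge-Amp\`ere estimates applied after an affine rescaling.
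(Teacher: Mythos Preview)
Your overall strategy---rescale via the Localization Theorem, apply Caffarelli's interior estimates, unwind---is the same as the paper's, and your second-part computation (the $|\log\eps|^2$ bound on $D^2u$) is essentially correct. The difference is in which sections you rescale and how you handle the interior-to-boundary gradient comparison, and there your first-part argument has a gap.

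You rescale the \emph{boundary} section $S_h$ at $x_0\in\p\Omega$ and then say you will apply ``Caffarelli's interior $C^{1,\alpha}$ estimate to $\tilde u$ near the origin'' to obtain $|\nabla u(x)-\nabla u(x_0)|\le C|x-x_0|^\alpha$. But the origin in the rescaled picture is a boundary point of the normalized half-ball, and Caffarelli's interior estimate does not reach it; it only controls $|\nabla\tilde u(\tilde z_1)-\nabla\tilde u(\tilde z_2)|$ for $\tilde z_1,\tilde z_2$ at a fixed positive distance from the flat boundary. You still need a separate argument to compare $\nabla u$ at an interior point with $\nabla u(x_0)$. The paper supplies exactly this missing link by working instead with the \emph{interior} maximal section $S_{y,\bar h}$ tangent to $\p\Omega$ at some $x_0$: Proposition~\ref{tan_sec} then gives both that $S_{y,\bar h}$ is equivalent to the boundary ellipsoid $E_{\bar h}$ (so the same $A_{\bar h}$ normalizes it) and, crucially, that $\nabla u(y)-\nabla u(x_0)=a\,e_n$ with $|a|\sim \bar h^{1/2}\sim d_{\p\Omega}(y)$. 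Since $S_{y,\bar h}\subset\subset\Omega$, interior Caffarelli applies cleanly on the rescaled section, and the boundary-to-boundary comparison is handled separately by Lemma~\ref{U-bound-grad2}. If you prefer to stay with boundary sections, you must invoke Proposition~\ref{tan_sec} (or equivalently Lemma~\ref{sep-lem}(c)) to get the interior-to-boundary gradient bound; the covering argument alone will not produce it.
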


The proof of Theorem \ref{main-reg} follows the same lines as the proof of the standard boundary estimate of Krylov. Our main tools are a localization theorem at the boundary for solutions to the Monge-Amp\`ere equation which was obtained in \cite{S}, and the interior Harnack estimates for solutions to the linearized Monge-Amp\`ere equations which were established in \cite{CG} (see Section \ref{tools}).

\section{The Localization Theorem and Weak Harnack Inequality }\label{tools}
In this section, we state the main tools used in the proof of Theorem \ref{main-reg}, the localization theorem and the weak Harnack inequality.\\
\h We start with the localization theorem. 
Let $u:\overline{\Omega}\rightarrow\R$ be a continuous convex function and 
assume that
\begin{equation}\label{0grad}
u(0)=0, \quad \nabla u(0)=0.
\end{equation}
Let $S_{h}(u)$ be the section of $u$ at $0$ with level $h$:
$$S_h := \{x \in \overline \Omega : \quad u(x) < h \}.$$

If the boundary data has quadratic growth near $\{x_n=0\}$ then, as $h \rightarrow 0$, $S_h$ is equivalent to a half-ellipsoid centered at 0. This is the content
of the Localization Theorem proved in \cite{S,S2}. Precisely, this theorem reads as follows.

\begin{thm}[Localization Theorem \cite{S,S2}]\label{main_loc}
 Assume that $\Omega$ satisfies \eqref{om_ass} and $u$ satisfies 
\eqref{eq_u}, 
\eqref{0grad} above and,
\begin{equation}\label{commentstar}\rho |x|^2 \leq u(x) \leq \rho^{-1} 
|x|^2 \quad \text{on $\p \Omega \cap \{x_n \leq \rho\}.$}\end{equation}
Then, for each $h<k$ there exists an ellipsoid $E_h$ of volume $\omega_{n}h^{n/2}$ 
such that
$$kE_h \cap \overline \Omega \, \subset \, S_h \, \subset \, k^{-1}E_h \cap \overline \Omega.$$

Moreover, the ellipsoid $E_h$ is obtained from the ball of radius $h^{1/2}$ by a
linear transformation $A_h^{-1}$ (sliding along the $x_n=0$ plane)
$$A_hE_h= h^{1/2}B_1$$
$$A_h(x) = x - \tau_h x_n, \quad \tau_h = (\tau_1, \tau_2, \ldots, 
\tau_{n-1}, 0), $$
with
$$ |\tau_{h}| \leq k^{-1} |\log h|.$$
The constant $k$ above depends only on $\rho, \lambda, \Lambda, n$.
\end{thm}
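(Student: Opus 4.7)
The plan is to prove the Localization Theorem by combining Aleksandrov-type volume bounds for sections, a unit-scale shape estimate produced via explicit quadratic barriers, and an affine dyadic iteration that tracks how the normalizing ellipsoid deforms as $h \to 0$. Throughout, the key geometric point is that the flat portion of $\p\Omega$ at $0$ (namely $\p\Omega \cap \{x_n = 0\}$, which is tangent to $\Omega$ to leading order by \eqref{om_ass}) must be preserved under any normalization, so the affine map putting $S_h$ into standard form is forced to take the sliding form $A_h(x) = x - \tau_h x_n$.

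Step 1 (Volume and unit-scale shape). For $h$ comparable to the fixed threshold $k$, the volume estimate $|S_h| \sim h^{n/2}$ follows from comparing $u$ with radial paraboloid solutions of $\det D^2 P = \lambda$ and $\det D^2 P = \Lambda$. To pin down the shape, I would bracket $u$ on $\p S_h$ by explicit barriers $P^\pm(x) = \alpha^\pm |x'|^2 + \beta^\pm x_n^2 + \gamma^\pm x_n$ with constants chosen so that $\det D^2 P^\pm \in [\lambda, \Lambda]$ and $P^\pm$ trap $u$ on $\p\Omega \cap \{x_n \le \rho\}$ using \eqref{commentstar}. The maximum principle then forces $S_h$ to sit between two half-ellipsoids of the claimed shape at unit scale, i.e.\ with $\tau_h = 0$ and $A_h = \mathrm{id}$.

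Step 2 (Dyadic iteration to small $h$). I would set up an induction on $h_j = 2^{-j} k$. At level $h_j$, rescale $\tilde u(y) = h_j^{-1} u(A_{h_j}^{-1} h_j^{1/2} y)$, sending the approximate ellipsoid $E_{h_j}$ to the unit ball. The hypotheses \eqref{eq_u} and \eqref{0grad} are invariant under shears preserving $\{x_n = 0\}$ (since $\det A_{h_j} = 1$), and the rescaled domain $\tilde \Omega_j$ remains squeezed between an interior tangent ball and a large ball with a uniform constant, so Step 1 applies to $\tilde u$ on $\tilde \Omega_j$. Pulling the resulting unit-scale shape back through the composition $A_{h_{j+1}} = A_{h_j} \circ (\text{next shear})$ identifies $E_{h_{j+1}}$ as a bounded perturbation of $E_{h_j}$, yielding the incremental estimate $|\tau_{h_{j+1}} - \tau_{h_j}| \le C$.

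Step 3 (Logarithmic accumulation). Summing the incremental bound over $j = 0, \dots, N$ with $N \sim |\log h|$ produces $|\tau_h| \le CN \le k^{-1}|\log h|$ after adjusting $k$. The main obstacle is the inductive step in Step 2: the boundary growth condition \eqref{commentstar} is not exactly preserved by the shears $A_{h_j}$, because each shear tilts the tangential directions of the rescaled domain relative to $\p\tilde\Omega_j$ and introduces a defect of order $h_j^{1/2}|\tau_{h_j}|$ in the quadratic separation constants. One must verify that this defect remains small enough — which is precisely compatible with $|\tau_{h_j}|$ growing at most logarithmically, since $h_j^{1/2}|\log h_j| \to 0$ — so that the quadratic separation of $\tilde u$ at every level is still governed by a constant comparable to the original $\rho$. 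It is this self-consistent feedback between the growth of $\tau_h$ and the uniform reapplicability of Step 1 that closes the induction and fixes the logarithmic (rather than polynomial) rate.
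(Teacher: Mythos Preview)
The paper does not contain a proof of this statement. Theorem~\ref{main_loc} is quoted from the external references~\cite{S,S2} and is used in this paper purely as a tool; Section~\ref{tools} explicitly says ``we state the main tools'' and gives no argument for the Localization Theorem. So there is no proof here to compare your proposal against.

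For what it is worth, your outline is broadly consonant with the strategy in~\cite{S,S2}: volume bounds for sections, a unit-scale shape estimate via barriers, and a dyadic affine iteration that accumulates the shear $\tau_h$ logarithmically. The place where your sketch is thinnest is Step~1 at unit scale: you assert that quadratic barriers $P^\pm$ together with the boundary growth \eqref{commentstar} force $S_h$ to be sandwiched between two half-ellipsoids with $A_h=\mathrm{id}$, but the maximum principle only controls $u$ in the interior once you have arranged $P^\pm$ to trap $u$ on the \emph{entire} boundary of some region, and the portion of $\p S_h$ lying in the interior of $\Omega$ is not directly governed by \eqref{commentstar}. In the actual proof this is handled by a more delicate argument (in particular one first establishes $|S_h|\sim h^{n/2}$ and then uses John's lemma plus the constraint that the flat boundary piece must be preserved to deduce the sliding form of $A_h$). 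Your Step~2--Step~3 feedback mechanism is the right idea, but as written it presupposes that Step~1 already delivers the full sliding-normalization conclusion at each scale, which is exactly the nontrivial part.
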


 The ellipsoid $E_h$, or equivalently the linear map $A_h$, 
provides useful information about the behavior of $u$ 
near the origin. From Theorem \ref{main_loc} we also control the shape of sections that are tangent to $\p \Omega$ at the origin. Before we state this result we introduce the notation for the section of $u$ centered at $x\in \overline \Omega$ at height $h$:
\begin{equation*}
 S_{x,h} (u) :=\{y\in \overline \Omega:  u(y) < u(x) + \nabla u(x) (y- x) +h\}.
\end{equation*}

\begin{prop}\label{tan_sec}
Let $u$ and $\Omega$ satisfy the hypotheses of the Localization Theorem \ref{main_loc} at the 
origin. Assume that for some $y \in \Omega$ the section $S_{y,h} \subset \Omega$
is tangent to $\p \Omega$ at $0$ for some $h \le c$. Then there exists a small 
 constant $k_0>0$ depending on $\lambda$, $\Lambda$, $\rho $ and $n$ such that
$$ \nabla u(y)=a e_n 
\quad \mbox{for some} \quad   a \in [k_0 h^{1/2}, k_0^{-1} h^{1/2}],$$
$$k_0 E_h \subset S_{y,h} -y\subset k_0^{-1} E_h, \quad \quad k_0 h^{1/2} \le dist(y,\p \Omega) \le k_0^{-1} h^{-1/2}, \quad $$
with $E_h$ the ellipsoid defined in the Localization Theorem \ref{main_loc}.
\end{prop}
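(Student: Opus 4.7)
The strategy is to normalize $u$ near the origin via the affine map produced by the Localization Theorem so that the section $S_h(u)$ becomes comparable to $B_1$, prove a ``unit scale'' version of the proposition in this rescaled picture, and then unwind the rescaling.

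First I would extract the direction of $\nabla u(y)$ from the tangency alone. Since $S_{y,h}\subset\Omega$ is tangent to $\partial\Omega$ at $0$, the supporting hyperplanes to $S_{y,h}$ and to $\Omega$ at $0$ must coincide, and by \eqref{om_ass} both equal $\{x_n=0\}$. The outward normal to $\partial S_{y,h}$ at $0$ is a positive multiple of $\nabla u(0)-\nabla u(y)=-\nabla u(y)$, so $\nabla u(y)=ae_n$ for some $a\ge 0$. The tangency $0\in\partial S_{y,h}$ also gives the height identity $u(y)=ay_n-h$, and convexity gives $u(y)\ge 0$, hence $ay_n\ge h$ and in particular $a>0$.

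Next, applying the Localization Theorem at $0$, set $T:=h^{-1/2}A_h$ and $\tilde u(z):=h^{-1}u(T^{-1}z)$. Then $\tilde u$ is convex on $\tilde\Omega:=T(\Omega)$ with $\det D^2\tilde u\in[\lambda,\Lambda]$, $\tilde u(0)=0$, $\nabla\tilde u(0)=0$, and
\[
kB_1\cap\overline{\tilde\Omega}\,\subset\,\tilde S_1(\tilde u)\,\subset\,k^{-1}B_1\cap\overline{\tilde\Omega}.
\]
Since $A_h=I-\tau_he_n^{T}$ with $\tau_h\cdot e_n=0$, we have $T^Te_n=h^{-1/2}e_n$ and $A_h^{-1}$ preserves the $n$-th coordinate, so a direct check gives $T(S_{y,h}(u))=\tilde S_{\tilde y,1}(\tilde u)$ with $\tilde y=Ty$, $\nabla\tilde u(\tilde y)=\tilde a e_n$, $a=h^{1/2}\tilde a$, and $y_n=h^{1/2}\tilde y_n$; this rescaled section remains tangent to $\partial\tilde\Omega$ at $0$. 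The hypotheses of the Localization Theorem are preserved by $T$ with comparable constants, so $\tilde\Omega$ stays confined to a fixed bounded region near $0$. Therefore the proposition reduces to showing, for a universal $k_0>0$,
\[
\tilde a,\tilde y_n\in[k_0,k_0^{-1}],\qquad k_0 B_1\,\subset\,\tilde S_{\tilde y,1}(\tilde u)-\tilde y\,\subset\,k_0^{-1}B_1,
\]
after which the claims follow by rescaling back through $T^{-1}$, using $T^{-1}B_1=E_h$ and $\operatorname{dist}(y,\partial\Omega)\asymp y_n$ near the origin.

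To establish the rescaled bounds I would combine interior Monge--Amp\`ere section theory with the boundary quadratic separation inherited from \eqref{commentstar}. The upper bound on $|\tilde y|$ and on the diameter of $\tilde S_{\tilde y,1}(\tilde u)$ follows because this is a convex set of Monge--Amp\`ere mass comparable to $1$ lying in a fixed bounded region. The identity $\tilde u(\tilde y)=\tilde a\tilde y_n-1$ together with the $L^\infty$ bound for $\tilde u$ on $\tilde S_1(\tilde u)$ yields the bounds on $\tilde a$ once $\tilde y_n$ is controlled. Finally, once $\tilde y$ sits at a uniform positive distance from $\partial\tilde\Omega$, Caffarelli's interior normalization of sections gives $\tilde S_{\tilde y,1}(\tilde u)-\tilde y\asymp B_1$.

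The main obstacle is the lower bound $\tilde y_n\ge k_0$: one must promote the boundary-only quadratic separation into an interior statement that the center of an interior section tangent at one boundary point cannot collapse to $\partial\tilde\Omega$. A compactness--contradiction argument is natural: if the bound failed along a sequence of admissible configurations, the normalized functions would converge (using $C^\alpha$ interior regularity for Monge--Amp\`ere solutions with bounded right-hand side under normalized hypotheses) to a limit convex function on a limit domain whose section at height $1$ is centered on the boundary and tangent at only the origin, which is ruled out by strict convexity of the limit combined with the limiting form of the quadratic separation.
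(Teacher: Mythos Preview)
Your reduction to a unit-scale picture via $T=h^{-1/2}A_h$ is fine, and you correctly isolate the real difficulty as the lower bound $\tilde y_n\ge k_0$. But two of your steps are not justified as written.

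First, your upper bound on the diameter of $\tilde S_{\tilde y,1}(\tilde u)$ is circular. You assert that this section ``lies in a fixed bounded region,'' but the only containment you have a priori is $\tilde S_{\tilde y,1}\subset\tilde\Omega$, and $\tilde\Omega$ has diameter of order $h^{-1/2}$. Knowing the volume of an interior section is $\sim 1$ does not by itself bound its diameter. The paper obtains this containment directly: since $S_h\subset\{x_n\le k^{-1}h^{1/2}\}$ and $u=h$ on $\partial S_h$, convexity gives $\{u<kh^{1/2}x_n\}\subset S_h$; in the rescaled picture this reads $\{\tilde u<k\,z_n\}\subset\tilde S_1\subset k^{-1}B_1$. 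One still has to link this to the particular slope $\tilde a$, which is what the paper's argument does in one stroke.

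Second, and more seriously, the compactness--contradiction scheme you propose for $\tilde y_n\ge k_0$ is soft and does not obviously close. Along a degenerating sequence the centers $\tilde y_j$ approach $\partial\tilde\Omega_j$, so you lose the interior strict-convexity and $C^\alpha$ estimates exactly where you need them; moreover, the sections $\tilde S_{\tilde y_j,1}$ are only known to sit inside the unbounded rescaled domains, so extracting a limit section with the desired tangency and reaching a contradiction requires nontrivial additional control that you have not supplied.

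The paper avoids all of this with a short direct argument. Writing $S_a':=\{u<ax_n\}$ (which is exactly $S_{y,h}$ once one knows $\nabla u(y)=ae_n$ and $0\in\partial S_{y,h}$), the Localization Theorem gives $S'_{kh^{1/2}}\subset S_h$, and it also furnishes a point $z\in\partial S_{\theta h}$ with $z_n\ge k(\theta h)^{1/2}$. Evaluating $u-kh^{1/2}x_n$ at $z$ produces a value $\le -\delta h$ for a suitable small $\theta$; Alexandrov's maximum principle on $S'_{kh^{1/2}}$ (where this function vanishes on the boundary and $\det D^2\ge\lambda$) then yields $|S'_{kh^{1/2}}|\ge c\,h^{n/2}=c|E_h|$. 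Combined with $S'_{kh^{1/2}}\subset S_h\subset k^{-1}E_h$, this pins down the shape of the tangent section and all the claimed bounds on $a$, $y$, and $S_{y,h}$ follow. I would replace your compactness step with this Alexandrov argument; it is both simpler and rigorous.
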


Proposition \ref{tan_sec} is a consequence of Theorem \ref{main_loc} and was proved \cite{S3}. For completeness we sketch its proof at the end of the paper. 

Next, we state the weak Harnack inequality. Caffarelli and Guti\'errez \cite{CG} proved H\"older estimates and Harnack inequalities
for solutions of the homogeneous equation $L_{u} v = 0$. Their approach is based on the Krylov and Safonov's  H\"older estimates for
linear elliptic equations in general form, with the sections of $u$ 
having the same role as euclidean balls have in the classical theory. We state the weak Harnack inequality in this setting (see also \cite{TW3}).
\begin{thm} (Theorem 4 \cite{CG})
 Let $u\in C^2(\Omega)$ be a locally strictly convex function satisfying
\begin{equation*}
 0<\lambda \leq \det D^{2} u \leq \Lambda, 
\end{equation*}
and let $v \ge 0$ be a nonnegative supersolution defined in a section 
$S_{x, h} (u)\subset \subset \Omega$,  $$L_u v:= U^{ij} v_{ij} \le 0.$$  If
\begin{equation*}
| \{v\ge 1\} \cap S_{x,h}(u)|\ge \mu |S_{x,h}(u)|
\end{equation*}
then $$ \inf_{S_{x, h / 2}(u)} v \ge c,$$ 
with $c>0$ a constant depending only on $n$, $\lambda$, $\Lambda$ and $\mu$.
\label{osci-thm}
\end{thm}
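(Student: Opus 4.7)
The plan is to adapt the classical Krylov--Safonov proof of the weak Harnack inequality to the affine-invariant setting in which sections $S_{x,h}(u)$ of $u$ replace Euclidean balls. The argument rests on three pillars: an Alexandrov--Bakelman--Pucci (ABP) maximum principle tailored to the operator $L_u$; a ``critical density'' lemma that converts a measure-theoretic hypothesis into a pointwise lower bound; and a Vitali / Calder\'on--Zygmund covering argument for sections, which is available under the pinching $\lambda \le \det D^2 u \le \Lambda$.

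First I would establish an affine-invariant ABP estimate for subsolutions of $L_u$. For $w\in C^2(\overline S)$ with $L_u w \ge -f$ in a section $S:=S_{y,h}(u)$ and $w\le 0$ on $\p S$, one expects, after normalizing the section to a ball of comparable radius via its John ellipsoid affine map, an inequality of the form
\begin{equation*}
\sup_S w^+ \;\le\; C(n)\,h^{1/2}\left(\int_{\{w=\Gamma_w\}} \frac{(f^-)^n}{\det D^2 u}\,dx\right)^{1/n},
\end{equation*}
where $\Gamma_w$ is the concave envelope of $w^+$. The derivation combines the pointwise inequality $\det D^2(-\Gamma_w)\cdot \det U \ge (\mathrm{tr}(U\,D^2(-\Gamma_w))/n)^n$ with the change of variables given by the gradient map of $\Gamma_w$; the pinching of $\det D^2 u$ makes the weight $(\det D^2 u)^{-1}$ universally bounded, yielding a clean affine-invariant ABP.

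Applied to the nonnegative supersolution $v$ together with a suitably constructed barrier on $S_{x,h}$, the ABP estimate delivers a \emph{critical density lemma}: there exist universal $\eta_0, \mu_0 \in (0,1)$, depending only on $n,\lambda,\Lambda$, such that if $|\{v \ge 1\}\cap S_{x,h}| \ge \mu_0\,|S_{x,h}|$ then $\inf_{S_{x,h/2}} v \ge \eta_0$. This is the decisive pointwise consequence of a quantitative measure hypothesis, but only at the single critical density threshold $\mu_0$.

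The last step is to remove the restriction $\mu = \mu_0$ and allow arbitrary $\mu>0$. This is done by a Calder\'on--Zygmund decomposition adapted to sections: one exploits the \emph{engulfing} and doubling properties of sections established in Caffarelli--Guti\'errez, namely $S_{y,h}\cap S_{z,h}\ne\emptyset \Rightarrow S_{z,h}\subset S_{y,Kh}$ for a universal $K$, and iterates the critical density lemma on a sequence of subsections. The main obstacle is geometric: sections need not be balls and may be very elongated, so the usual dyadic cube decomposition is unavailable; the engulfing and doubling properties compensate for this and let the standard Vitali selection and iteration proceed, producing a power-decay estimate of the form $|\{v<\eta_0^k\}\cap S_{x,h/2}| \le (1-\theta)^k|S_{x,h/2}|$ that eventually forces $\inf_{S_{x,h/2}} v \ge c(\mu,n,\lambda,\Lambda)>0$.
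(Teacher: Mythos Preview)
The paper does not prove this statement: it is quoted verbatim as Theorem~4 of Caffarelli--Guti\'errez \cite{CG} and used as a black box, with no argument supplied. There is therefore nothing in the paper to compare your proposal against.

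That said, your outline is a faithful sketch of the original Caffarelli--Guti\'errez argument: an affine-invariant ABP estimate for $L_u$, a critical density lemma at threshold $\mu_0$, and an iteration via the engulfing/doubling properties of sections to pass from $\mu_0$ to arbitrary $\mu>0$. If you intend this as an actual proof rather than a plan, the main places where real work is hidden are (i) the construction of the barrier needed to turn ABP into the critical density lemma, which in \cite{CG} uses a careful cut-off adapted to a normalized section, and (ii) the Calder\'on--Zygmund decomposition for sections, which is not a direct Vitali argument but relies on a Besicovitch-type covering lemma specific to sections of solutions with pinched Monge--Amp\`ere measure. Neither step is routine, and both require the full strength of the Caffarelli--Guti\'errez machinery; your sketch identifies them correctly but does not carry them out.
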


\section{Boundary behavior of the rescaled functions}\label{boundary-sec}

We denote by $c$, $C$ positive constants depending on $\rho$, $\lambda$, $\Lambda$, 
$n$, and their values may change from line to line whenever 
there is no possibility of confusion. We refer to such constants as {\it universal constants}.

Sometimes, for simplicity of notation, we write  $S_{x,h}$ instead of $S_{x,h}(u)$ and we drop the $x$ subindex whenever $x=0$, i.e., $S_h=S_{0,h}(u)$.

We denote the distance from a point $x$ to a closed set $\Gamma$ as
\begin{equation}\label{dist}
d_\Gamma(x)=dist(x, \Gamma).
\end{equation}

First we obtain pointwise $C^{1, \alpha}$ estimates on the boundary in the setting of
the Localization Theorem \ref{main_loc}. We know that for all $h \le k$,
 $S_h$ satisfies 
$$k E_h \cap \bar \Omega  \subset S_h \subset k^{-1} E_h,$$ 
with $A_h$ being a linear transformation 
and
$$\det A_{h} = 1,~E_h=A^{-1}_hB_{h^{1/2}}, \quad \quad A_hx=x-\tau_hx_n$$
$$\tau_h \cdot e_n=0, \quad \|A_h^{-1}\|, \,\|A_h\| \le k^{-1} |\log h|.$$
This gives 
\begin{equation}\label{small-sec}
 \overline \Omega \cap B^{+}_{ch^{1/2}/\abs{\log h}}\subset S_{h}  
\subset B^{+}_{C h^{1/2} \abs{\log h}},
\end{equation}
or $$|u| \le h \quad \mbox{in} \quad 
\overline \Omega \cap B^{+}_{ch^{1/2}/\abs{\log h}}
.$$
Then for all $x$ close to the origin $$|u(x)| \le C |x|^2|\log x|^2,$$ 
which shows that $u$ is differentiable 
at $0$. We remark that the other inclusion of \eqref{small-sec} gives a lower bound for $u$ near the origin
\begin{equation}\label{lbd}
u(x) \ge c |x|^2 |\log x|^{-2} \ge |x|^3.
\end{equation}

We summarize the differentiability of $u$ in the next lemma.

\begin{lem}
Assume $u$ and $\Omega$ satisfy the hypotheses of the Localization Theorem \ref{main_loc} at a point $x_0 \in \p \Omega$. If $x\in \overline \Omega \cap B_r(x_0)$, $r \le 1/2$, then
\begin{equation}
 \abs{u(x)-u(x_{0})-\nabla u(x_{0})(x-x_{0})} \leq C r^{2} |\log r|^2. 
\label{U-bound-grad}
\end{equation}
Moreover, if $u$, $\Omega$ satisfy the hypotheses of the Localization Theorem \ref{main_loc} also at a point $x_1 \in \p \Omega \cap B_r(x_0)$ then
$$|\nabla u(x_1)-\nabla u(x_0)| \le C r |\log r|^2.$$
\label{U-bound-grad2}
\end{lem}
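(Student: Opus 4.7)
The plan is to deduce both inequalities directly from the Localization Theorem, specializing the discussion that already precedes the statement. I would first translate $x_0$ to the origin and subtract its supporting hyperplane: the function
$$
\tilde u(x) = u(x+x_0) - u(x_0) - \nabla u(x_0)\cdot x
$$
satisfies $\tilde u(0)=0$, $\nabla \tilde u(0)=0$, has the same Hessian and Monge-Amp\`ere measure as $u$, and continues to satisfy the hypotheses of Theorem \ref{main_loc}; by convexity $\tilde u\geq 0$ on $\overline\Omega-x_0$. This normalization reduces (\ref{U-bound-grad}) to an upper bound $\tilde u\leq Cr^2|\log r|^2$ on $\overline\Omega\cap B_r$.

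For this upper bound I would invoke the Localization Theorem directly: combining the inclusion $kE_h\cap\overline\Omega\subset S_h$ with the bound $\|A_h\|\leq k^{-1}|\log h|$ yields
$$
\overline\Omega\cap B^{+}_{ch^{1/2}/|\log h|}\subset S_h,
$$
as in (\ref{small-sec}), so $\tilde u\leq h$ on $\overline\Omega\cap B^{+}_{ch^{1/2}/|\log h|}$. Choosing $h$ of order $r^2|\log r|^2$, so that $ch^{1/2}/|\log h|\geq r$, delivers (\ref{U-bound-grad}) for $r$ smaller than a universal threshold; the remaining range $r\in[r_0,1/2]$ is absorbed into the constant.

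For the second inequality I would use the two supporting hyperplanes $\ell_i(x)=u(x_i)+\nabla u(x_i)(x-x_i)$ at $x_0$ and $x_1$, which satisfy $\ell_i\leq u$. Applying (\ref{U-bound-grad}) at $x_0$ with radius $r$ and at $x_1$ with radius $2r$ (both applications are permitted, by the hypothesis that the Localization Theorem holds at both points) gives
$$
0\leq u-\ell_0\leq Cr^2|\log r|^2\quad\text{on }B_r(x_0)\cap\overline\Omega,
$$
$$
0\leq u-\ell_1\leq Cr^2|\log r|^2\quad\text{on }B_{2r}(x_1)\cap\overline\Omega,
$$
and subtracting yields $|\ell_1-\ell_0|\leq Cr^2|\log r|^2$ on the common set. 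The function $\ell_1-\ell_0$ is affine with gradient $p:=\nabla u(x_1)-\nabla u(x_0)$, so the task reduces to extracting the oscillation of an affine function on the common set.

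The one step that needs geometric care, and which I expect to be the main obstacle, is producing a euclidean ball of radius comparable to $r$ inside $B_r(x_0)\cap B_{2r}(x_1)\cap\overline\Omega$. Here I would use the interior ball condition (\ref{tang-int}) at $x_0$: for $r$ small, the ball $B_{r/4}(y_0)$ with $y_0$ a distance $r/2$ from $x_0$ along the interior normal lies in $\Omega\cap B_r(x_0)$, and $|y_0-x_1|<2r$ forces it to sit inside $B_{2r}(x_1)$ as well, which is precisely the reason for using the enlarged radius $2r$ at $x_1$. Since $\ell_1-\ell_0$ oscillates by $\sim |p|r$ on this ball while being bounded by $Cr^2|\log r|^2$, one concludes $|p|\leq Cr|\log r|^2$. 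All the non-geometric ingredients are direct consequences of the Localization Theorem; this last matching of scales is the only delicate part.
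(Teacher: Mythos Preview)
Your proposal is correct and follows essentially the same approach as the paper. The paper derives \eqref{U-bound-grad} from the inclusion \eqref{small-sec} exactly as you do, and for the second inequality it simply states that it ``follows from writing \eqref{U-bound-grad} for $x_0$ and $x_1$ at all points $x$ in a ball $B_{cr}(y) \subset \Omega$''; your argument spells out this step, including the use of the interior tangent ball at $x_0$ to locate such a ball, which the paper leaves implicit.
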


Clearly, the second statement follows from writing \eqref{U-bound-grad} for $x_0$ and $x_1$ at all points $x$ in a ball $B_{cr}(y) \subset \Omega$.

Next we discuss the scaling for our linearized Monge-Amp\`ere equation. Under the linear transformations
\begin{equation*}
 \tilde{u}(x)= \frac{1}{a}u(Tx), \quad \quad \tilde{v}(x) = \frac{1}{b}v(Tx), 
\end{equation*}
$$\tilde{g}(x) = \frac{1}{a^{n-1}b}(\det T)^2 g(Tx),  $$
we find that
\begin{equation}
 \tilde{U}^{ij}\tilde{v}_{ij} = \tilde{g}. 
\label{eq-trans}
\end{equation}
Indeed, we note that
\begin{eqnarray*}
 D^{2}\tilde{u}= \frac{1}{a}T^{t} D^{2} u T, \quad D^{2}\tilde{v}= 
\frac{1}{b}T^{t} D^{2} v T,
\end{eqnarray*}
and
\begin{eqnarray*}
 \tilde{U} & =& (\det  D^{2}\tilde{u}) ( D^{2}\tilde{u})^{-1} \\ 
&=&\frac{1}{a^{n-1}}(\det T)^2 (\det D^{2} u) \, T^{-1} (D^{2} u)^{-1} 
(T^{-1})^{t}\\
&=& \frac{1}{a^{n-1}}(\det T)^2 T^{-1} U (T^{-1})^{t}
\end{eqnarray*}
and (\ref{eq-trans}) easily follows.\\
We use the rescaling above with $$a = h, \quad b = h^{1/2}, \quad ~T = h^{1/2} 
A_h^{-1}$$
where $ A_{h}$ is the matrix in the Localization theorem. 
We denote the rescaled functions by \begin{equation*}
 u_h(x):=\frac{u(h^{1/2}A^{-1}_hx)}{h}, \quad \quad  
v_h(x):=\frac{v(h^{1/2}A^{-1}_hx)}{h^{1/2}}.
\end{equation*}
$$g_h(x):=h^{1/2}g(h^{1/2}A^{-1}_hx),$$ 
and they satisfy
\begin{equation}
U^{ij}_{h} D_{ij} v_{h} = g_h.
\label{sol-trans}
\end{equation}

The function $u_h$ is continuous and is defined in $\overline \Omega_h$ with
 $$\Omega_h:= h^{-1/2}A_h \Omega,$$
 and solves the Monge-Amp\`ere equation
 $$\det D^2 u_h=f_h(x), \quad \quad \lambda \le f_h \le \Lambda,$$
 with
 $$f_h(x):=f(h^{1/2}A_h^{-1}x).$$
 The section at height 1 for $u_h$ centered at the origin satisfies
$$S_1(u_h)=h^{-1/2}A_hS_h,$$ and by the localization theorem we obtain $$B_k \cap \overline \Omega_h \subset S_1(u_h) \subset B_{k^{-1}}^+.$$

 We remark that since $$tr \, U =h^{-1} \, tr (T U_h T^t) \le h^{-1} \|T\|^2 \, tr 
\, U_h,$$ we obtain
\begin{equation}\label{gh}
\|g_h/ \,tr \, U_h\|_{L^\infty} \le C  h^{1/2} |\log h|^2 \, \,  \|g/ \, tr \, U\|_{L^\infty} .
\end{equation}

In the next lemma we investigate the properties of the rescaled function $u_h$.
We recall that if $x_0 \in \p \Omega \cap B_\rho$ then $\Omega$ has an interior tangent ball of radius $\rho$ at $x_{0}$, and $u$ satisfies
\begin{equation}
 \rho\abs{x-x_{0}}^2 \leq u(x)- u(x_{0})-\nabla u(x_{0}) (x- x_{0}) \leq \rho^{-1}\abs{x-x_{0}}^2,~\quad \forall x \in \p\Omega.
\label{sep-near01}
\end{equation}

\begin{lem}
If $h\leq c$, then 

a) $\p \Omega_h \cap B_{2/k}$ is a graph 
in the $e_n$ direction whose $C^{1,1}$ norm 
is bounded by $C h^{1/2}$;

b) for any 
$x,x_{0}\in\p\Omega_{h}\cap B_{2/k}$ we have
\begin{equation}
 \frac{\rho}{4}\abs{x-x_{0}}^2 \leq u_h(x) - u_h(x_{0}) -\nabla u_h(x_{0}) (x- x_{0})\leq 4\rho^{-1} \abs{x- x_{0}}^2, 
\label{sep-near0}
\end{equation}

c) if $r \le c$ small, we have $$|\nabla u_h| \le 
C r |\log r|^2 \quad \mbox{in} \quad \overline \Omega_h \cap B_r.$$

\label{sep-lem}
\end{lem}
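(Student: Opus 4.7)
The plan is to verify (a), (b), (c) in order. Parts (a) and (b) are direct computations pulling back the geometry of $\Omega$ and the quadratic separation \eqref{sep-near01} via the change of variables $y=h^{-1/2}A_h x$, while (c) will follow by applying Lemma \ref{U-bound-grad2} to the pair $(u_h, \Omega_h)$ together with a convexity argument at interior points.

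For (a), the interior-ball hypothesis on $\Omega$ forces $\p\Omega$ to be a $C^{1,1}$ graph $x_n=\psi(x')$ near $0$ with $\psi(0)=0$, $\nabla\psi(0)=0$, $\|D^2\psi\|_\infty\leq C(\rho)$. Setting $\psi_h(z'):=h^{-1/2}\psi(h^{1/2}z')$ (so that $\|D^2\psi_h\|_\infty\leq Ch^{1/2}$), the graph equation in the new coordinates becomes $y_n=\psi_h(y'+\tau_h'y_n)$. Since $\|D\psi_h\|\cdot|\tau_h|\lesssim h^{1/2}|\log h|$ is small for $h\leq c$, the implicit function theorem solves this for $y_n=\Psi_h(y')$ on $B_{2/k}$, and differentiating twice yields $\|D^2\Psi_h\|_\infty\leq Ch^{1/2}$.

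For (b), set $\tilde x:=h^{1/2}A_h^{-1}x$ and $\tilde x_0:=h^{1/2}A_h^{-1}x_0$, which lie in $\p\Omega\cap B_\rho$ for $h\leq c$. The definition of $u_h$ gives
\[
u_h(x)-u_h(x_0)-\nabla u_h(x_0)(x-x_0)=h^{-1}\bigl[u(\tilde x)-u(\tilde x_0)-\nabla u(\tilde x_0)(\tilde x-\tilde x_0)\bigr],
\]
and $|\tilde x-\tilde x_0|^2=h|A_h^{-1}(x-x_0)|^2$, so \eqref{sep-near01} yields the same sandwich with $|A_h^{-1}(x-x_0)|^2$ in place of $|x-x_0|^2$. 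Since $A_h^{-1}(x-x_0)-(x-x_0)=\tau_h(x_n-x_{0,n})$ and by (a) $|x_n-x_{0,n}|\leq Ch^{1/2}|x-x_0|$ on the graph of $\Psi_h$, the discrepancy $|\tau_h(x_n-x_{0,n})|\leq Ck^{-1}h^{1/2}|\log h|\,|x-x_0|$ is arbitrarily small for $h\leq c$. Hence $\tfrac{1}{2}|x-x_0|\leq|A_h^{-1}(x-x_0)|\leq 2|x-x_0|$ and the bound with constants $\rho/4$, $4\rho^{-1}$ follows.

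For (c), I first verify that $(u_h,\Omega_h)$ satisfies the hypotheses of the Localization Theorem \ref{main_loc} at the origin (and at every boundary point of $\p\Omega_h\cap B_c$) with universal constants: the interior tangent ball of $\Omega_h$ is inherited via (a), the quadratic separation from (b), and the Monge--Amp\`ere bounds are preserved since $\det A_h=1$. Lemma \ref{U-bound-grad2} applied to $(u_h,\Omega_h)$ at $0$ then gives $|u_h(y)|\leq Cr^2|\log r|^2$ on $\overline\Omega_h\cap B_r$, and its second statement (applied at $0$ and a boundary point $x_1\in\p\Omega_h\cap B_r$) gives $|\nabla u_h(x_1)|\leq Cr|\log r|^2$. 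For an interior $x\in\Omega_h\cap B_r$, set $\xi:=\nabla u_h(x)/|\nabla u_h(x)|$: if $x+r\xi\in\overline\Omega_h$, convexity yields $r|\nabla u_h(x)|\leq u_h(x+r\xi)-u_h(x)\leq Cr^2|\log r|^2$; otherwise the segment $\{x+t\xi:t\in[0,r]\}$ exits $\overline\Omega_h$ at some $y^*\in\p\Omega_h\cap B_{2r}$, and monotonicity of the subdifferential $(\nabla u_h(x)-\nabla u_h(y^*))\cdot(x-y^*)\geq 0$ combined with the boundary bound gives $|\nabla u_h(x)|\leq|\nabla u_h(y^*)|\leq Cr|\log r|^2$. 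The main delicate point is the universal transfer of the Localization hypotheses to $(u_h,\Omega_h)$ despite $\|A_h\|\sim|\log h|$; once it is in place, Lemma \ref{U-bound-grad2} and the convexity dichotomy handle the rest routinely.
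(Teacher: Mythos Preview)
Your treatment of (a) and (b) is essentially the paper's argument, just organized differently: the paper first proves the distance comparison $\tfrac12|x-x_0|\le h^{-1/2}|X-X_0|\le 2|x-x_0|$ using the $C^{1,1}$ bound of the \emph{original} boundary $\p\Omega$, then deduces both (b) and the $C^{1,1}$ bound of $\p\Omega_h$ from it, whereas you go through an implicit-function parametrization. Either route is fine.

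The gap is in (c). You assert that $(u_h,\Omega_h)$ satisfies the hypotheses of the Localization Theorem at every $x_0\in\p\Omega_h\cap B_c$ and then invoke Lemma~\ref{U-bound-grad2}. But $\Omega_h=h^{-1/2}A_h\Omega$ has diameter of order $h^{-1/2}$, so condition~\eqref{om_ass} (containment in a ball of universal radius) fails for the pair $(u_h,\Omega_h)$. The paper instead works with $(u_h,S_1(u_h))$, which \emph{is} universally bounded; however, $\p S_1(u_h)$ now has a second piece $\{u_h=1\}$, and verifying the quadratic-separation hypothesis~\eqref{commentstar} at $x_0$ is no longer automatic from (b). This is exactly what the paper spends most of its proof of (c) on: it first bounds $\partial_n u_h(x_0)$ from above by convexity (from $|u_h|\le Cr^2|\log r|^2$ in $B_{2r}$), bounds $|\nabla_{x'}u_h(x_0)|$ directly from the separation (b), uses these to force the inclusion $S_{x_0,1/2}(u_h)\subset S_1(u_h)\subset B_{1/k}$, and finally uses a volume comparison $|S_{x_0,1/2}(u_h)|\sim 1$ to control $|\nabla u_h(x_0)|$. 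Only after all this does the paper conclude that the Localization hypotheses hold at $x_0$ in $S_1(u_h)$ and invoke Lemma~\ref{U-bound-grad2}. You correctly flag this transfer as ``the main delicate point,'' but the three-line justification you give (tangent ball from (a), separation from (b), $\det A_h=1$) does not survive the unbounded-domain issue, and the missing a~priori gradient control at $x_0$ is precisely the substance of the argument.

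Your convexity dichotomy for extending the gradient bound from $\p\Omega_h\cap B_r$ to interior points of $\overline\Omega_h\cap B_r$ is correct and is a welcome addition; the paper leaves this step implicit.
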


\begin{proof}
For  $x, x_{0}\in\p\Omega_{h}\cap B_{2/k}$ we denote 
\begin{equation*}
 X =Tx, \quad X_0=Tx_0, \quad T:= h^{1/2} A^{-1}_{h},
\end{equation*}
hence $$ X, X_0 \in \p \Omega \cap B_{Ch^{1/2}\abs{\log h}}.$$
First we show that
\begin{equation}
\frac{\abs{x-x_0}}{2}\leq  \frac{\abs{X-X_{0}}}{h^{1/2}} \leq 2 \abs{x-x_{0}},
\label{sep-near02}
\end{equation}
which is equivalent to
$$1/2 \le |A_hZ|/|Z| \le 2, \quad \quad Z:=X-X_0.$$
Since $\p \Omega$ is $C^{1,1}$ in a neighborhood of the origin we find
$$|Z_n| \le Ch^{1/2}|\log h||Z'|$$ hence, if $h$ is small
$$|A_hZ-Z|=|\tau_hZ_n| \le Ch^{1/2}|\log h|^2 |Z'| \le |Z|/2,$$ 
and \eqref{sep-near02} is proved.

Part b) follows now from (\ref{sep-near01}) and the equality 
\begin{equation*}
 u_h(x) - u_h(x_{0}) -\nabla u_h(x_{0}) (x- x_{0}) 
=\frac{1}{h} (u(X)-u(X_0)-\nabla u(X_0)(X-X_0).
\end{equation*}

Next we show that $\p \Omega_h$ has small $C^{1,1}$ norm. Since $\p \Omega$ has an 
interior tangent ball at $X_0$ we see that
$$|(X-X_0) \cdot \nu_0| \le C|X-X_0|^2,$$
where $\nu_0$ is the exterior normal to $\Omega$ at $X_0$. This implies, in view of (\ref{sep-near02})
$$|(x-x_0)\cdot T^t \nu_0| \le C h |x-x_0|^2,$$
or
$$|(x-x_0) \cdot \tilde \nu_0|\le C\frac{h}{|T^t \nu_0|}|x-x_0|^2,$$ where 
$$\tilde \nu_0:=T^t \nu_0 /|T^t \nu_0|.$$
From the formula for $A_h$ we see that
$$e_n \cdot ((A_h^{-1})^Te_n)=(A_h^{-1}e_n)\cdot e_n=1,$$
hence $$|(A_h^{-1})^Te_n| \ge 1. $$ Since $$|\nu_0 + e_n| \le Ch^{1/2} |\log h|$$
we obtain $$|(A_h^{-1})^T \nu_0| \ge 1-C h^{1/2}\, |\log h| \, \|A_h^{-1}\| \ge 1/2 ,$$
thus 
$$|T^t \nu_0|=h^{1/2} |(A_h^{-1})^T\nu_0| \ge h^{1/2}/2.$$
In conclusion
$$|(x-x_0) \cdot \tilde \nu_0|\le C h^{1/2} |x-x_0|^2,$$
which easily implies our claim about the $C^{1,1}$ norm of $\p \Omega_h$.

Next we prove property c). From a), b) above we see that $u_h$ 
satisfies in $S_1(u_h)$ the hypotheses of the Localization Theorem 
\ref{main_loc} at $0$ for a small 
$\tilde \rho$ depending on the given constants. We consider a point $x_0 \in \p 
\Omega_h 
\cap B_r$, and by Lemma \ref{U-bound-grad2}, it remains to show that $u_h$, $S_1(u_h)$ satisfy the hypotheses of the Localization Theorem \ref{main_loc} also at $x_0$.  From \eqref{U-bound-grad} we have
\begin{equation}\label{ubd}
|u_{h}| \le Cr^2 |\log r|^2 \quad \mbox{in} \quad  \overline \Omega_h \cap B_{2r},
\end{equation} 
which, by convexity of $u_h$ gives $$\partial_n u_h(x_0) \le C r |\log r|^2.$$
On the other hand, we use part b) at $x_0$ and $0$ (see \eqref{sep-near0}) and obtain
$$ |u_h(x_0)+\nabla u_h(x_0) \,  (x-x_0)| \le C r^2 \quad \mbox{on} \quad \p \Omega \cap B_r,$$ 
thus
$$|\nabla u_h(x_0) \cdot x| \le C r^2 \quad \mbox{on} \quad \p \Omega \cap B_r.$$
Since $x_n \ge 0$ on $\p \Omega$, we see that if $\partial_n u_h(x_0)\ge 0$ then,
$$ \nabla_{x'} u_h(x_0) \cdot x' \le C r^2 \quad \mbox{if} \quad |x'| \le r/2,$$ which 
gives 
$$|\nabla_{x'} u_h(x_0)| \le Cr.$$
We obtain the same conclusion similarly if $\partial_n u_h(x_0)\le 0$. The upper bounds 
on $\partial_n u_h(x_0)$ and $|\nabla_{x'}u_h(x_0)|$ imply that
if $x \in S_1(u_h) \subset B_{1/k}$ we have
 $$u_h(x_0)+\nabla u_{h}(x_0) \cdot (x-x_0) + 1/2 \le Cr^2 + Cr|\log r|^2 + 1/2 <1,$$
provided that $r$ is small.  
This shows that $$S_{x_0, \frac 12}(u_h) \subset S_1(u_h) \subset B_{1/k}.$$ Moreover, 
since $$|S_{x_0,\frac 12}(u_h)|=h^{-n/2}|S_{X_0, \frac h 2}(u)| \sim 1$$ we obtain a bound for $|\nabla u(x_0)|$. Now we can 
easily conclude from parts a) and b) and the inclusion above that
$u_h$ satisfies in $S_1(u_h)$ the hypotheses of the Localization Theorem 
at $x_0$ for a small $\tilde \rho$.

\end{proof}

In the next proposition we compare the distance functions under the following transformations of point and domain:
\begin{equation*}
 x\rightarrow X:=Tx, ~\Omega_h \rightarrow \Omega= T \Omega_h, \quad \quad T=h^{1/2}A_h^{-1}.
\end{equation*}

\begin{prop}\label{bdr-distort}
For $x\in \Omega_{h}\cap B^{+}_{k^{-1}}$, let $X= Tx \in\Omega.$ Then (see notation \eqref{dist})
\begin{equation*}
1 - C h^{1/2}\abs{\log h}^{2}\leq \frac{h^{-1/2} d_{\p \Omega} (X)}{d_{\p \Omega_h}(x)} \leq 1 + C h^{1/2}\abs{\log h}^{2}.
\end{equation*}
\end{prop}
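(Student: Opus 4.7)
The key idea is that near the origin, both $\p \Omega$ and $\p \Omega_h$ are nearly horizontal Lipschitz graphs, so the distance from a point above the graph is essentially the vertical gap, and $T$ stretches vertical gaps by exactly $h^{1/2}$ (since $\tau_h\cdot e_n=0$). First I would write $\p \Omega$ locally as $\{X_n = \psi(X')\}$. Using that $\Omega$ is convex, lies inside $\{X_n\ge 0\}$, and has an interior tangent ball of radius $\rho$ at each boundary point near $0$, the function $\psi$ is convex with $\psi(0)=0$, $\nabla \psi(0)=0$ and $0\le D^2\psi\le C$; in particular $|\nabla \psi(Y')|\le C|Y'|$. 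Lemma \ref{sep-lem}(a) gives $\p \Omega_h\cap B_{2/k}=\{x_n=\phi(x')\}$ with $\phi(0)=0$, $\nabla \phi(0)=0$ and $|D^2\phi|\le Ch^{1/2}$. Since $Tx=(h^{1/2}(x'+\tau_h x_n),\, h^{1/2}x_n)$ and $\p \Omega_h = T^{-1}\p\Omega$, the two graphs are linked by the implicit identity
\begin{equation*}
h^{1/2}\phi(x')=\psi\bigl(h^{1/2}(x'+\tau_h\phi(x'))\bigr),\qquad |x'|\le 2/k.
\end{equation*}

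Second, I would use the elementary fact that for a point above a Lipschitz graph with constant $L$, the distance to the graph equals the vertical gap up to a factor $(1+O(L^2))$. Applied to $x\in \Omega_h\cap B^+_{1/k}$, whose foot of perpendicular on $\p \Omega_h$ lies in $B_{2/k}$ with $L\le Ch^{1/2}$, this yields
\begin{equation*}
d_{\p\Omega_h}(x)=(x_n-\phi(x'))(1+O(h)).
\end{equation*}
Applied to $X=Tx\in B^+_{Ch^{1/2}|\log h|}$, the relevant values of $\psi$ are taken on a ball of radius $Ch^{1/2}|\log h|$ where $|\nabla\psi|\le Ch^{1/2}|\log h|$, hence
\begin{equation*}
d_{\p\Omega}(X)=(X_n-\psi(X'))(1+O(h|\log h|^2)).
\end{equation*}

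The heart of the proof is to show $X_n-\psi(X')=h^{1/2}(x_n-\phi(x'))\bigl(1+O(h^{1/2}|\log h|^2)\bigr)$. Since $X_n=h^{1/2}x_n$, subtracting the implicit identity gives
\begin{equation*}
X_n-\psi(X')-h^{1/2}(x_n-\phi(x'))=\psi\bigl(h^{1/2}(x'+\tau_h\phi(x'))\bigr)-\psi\bigl(h^{1/2}(x'+\tau_h x_n)\bigr).
\end{equation*}
The two arguments differ by $h^{1/2}\tau_h(x_n-\phi(x'))$, both lie in the ball of radius $Ch^{1/2}|\log h|$ on which $\psi$ is Lipschitz with constant $Ch^{1/2}|\log h|$, and $|\tau_h|\le k^{-1}|\log h|$; so the right-hand side is $O(h|\log h|^2)(x_n-\phi(x'))=h^{1/2}\cdot O(h^{1/2}|\log h|^2)(x_n-\phi(x'))$. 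Combining with the two distance identities and dividing by $h^{-1/2}d_{\p\Omega_h}(x)$ yields the claim after absorbing the smaller $O(h)$ and $O(h|\log h|^2)$ errors into $O(h^{1/2}|\log h|^2)$. The delicate point is confining all evaluations of $\psi$ to the small ball of radius $Ch^{1/2}|\log h|$, which forces its Lipschitz constant there to be $O(h^{1/2}|\log h|)$; without this restriction one would only obtain an error of order $|\log h|$ or $|\log h|^2$, which is useless.
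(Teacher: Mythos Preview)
Your argument is correct. The key insight—that $T$ preserves the $n$-th coordinate up to the factor $h^{1/2}$, so vertical gaps scale exactly, while the shear $\tau_h$ introduces only a lower-order correction controlled by the small Lipschitz constant of $\psi$ on the image ball of radius $Ch^{1/2}|\log h|$—is precisely what is needed, and you track the errors carefully.

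The paper takes a different and shorter route. Instead of parametrizing both boundaries as graphs and comparing vertical gaps, it computes the directional derivative of $x \mapsto h^{-1/2}d_{\partial\Omega}(Tx)$ along the inward unit normal $\xi_x$ to $\partial\Omega_h$. By the chain rule this derivative equals $\xi_X \cdot (A_h^{-1}\xi_x)$, where $\xi_X$ is the inward unit normal to $\partial\Omega$. Using $|\xi_x-e_n|\le Ch^{1/2}$ (from Lemma~\ref{sep-lem}(a)), $|\xi_X-e_n|\le Ch^{1/2}|\log h|$ (from the $C^{1,1}$ bound on $\partial\Omega$), $\|A_h^{-1}\|\le C|\log h|$, and the identity $e_n\cdot A_h^{-1}e_n=1$, one gets $|\xi_X\cdot(A_h^{-1}\xi_x)-1|\le Ch^{1/2}|\log h|^2$. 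Since both $h^{-1/2}d_{\partial\Omega}(Tx)$ and $d_{\partial\Omega_h}(x)$ vanish on $\partial\Omega_h$ and the former has normal derivative within $Ch^{1/2}|\log h|^2$ of $1$, integrating along the normal yields the ratio bound directly. Your approach is more computational and makes the source of each error term transparent; theirs is more geometric and avoids writing out the graphs and the implicit identity altogether.
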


\begin{proof}
Denote by $\xi_x$, $\xi_X$ the unit vectors at $x$, $X$ which give the perpendicular direction to $\p \Omega_h$ respectively $\p \Omega$, and which point inside the domain. Since
$\p \Omega$ is $C^{1,1}$ at the origin and $|X| \le C h^{1/2} |\log h|$ we find
$$|\xi_X-e_n| \le C h^{1/2} |\log h|.$$
Moreover, the $C^{1,1}$ bound of $\p \Omega_h$ from Lemma \ref{sep-lem} shows that
$$|\xi_x-e_n|\le C h^{1/2}.$$
We compare $h^{-1/2}d_{\p \Omega}(X)$ with $d_{\p \Omega_h}(x)$ by computing the directional derivative of $ h^{-1/2}d_{\p \Omega}(X)$ along $\xi_x$. We have
\begin{align*}
\nabla_x (h^{-1/2} d_{\p \Omega}(X)) \cdot \xi_x&=h^{-1/2} \nabla_X d_{\p \Omega}(X) \, \, T   \, \xi_x \\
&=h^{-1/2}\xi_X \cdot (T \xi_x) \\
&=\xi_X \cdot (A_h^{-1} \xi_x)
\end{align*}
From the inequalities above on $\xi_x$, $\xi_X$ we find $$|\xi_X \cdot (A_h^{-1} \xi_x)-e_n \cdot (A_h^{-1} e_n)|\le Ch^{1/2} |\log h|\|A_h^{-1}\| \le C h^{1/2} |\log h|^2.$$
Using  $$e_n \cdot (A_h^{-1} e_n)=1$$ we obtain
$$|\nabla_x (h^{-1/2} d_{\p \Omega}(X)) \cdot \xi_x-1| \le Ch^{1/2}|\log h|^2,$$
which implies our result.

\end{proof}

\section{The class $\mathcal D_{\sigma}$ and its main properties}\label{property-sec}

In this section we introduce the class $\mathcal D_\sigma$ that captures the properties of the rescaled functions $u_h$ in $S_1(u_h)$. By abuse of notation we use $u$ and $\Omega$ when we define $\mathcal D_\sigma$.

Fix $\rho, \lambda, \Lambda$. We introduce the class $\mathcal D_{\sigma}$ consisting of pairs of function $u$ and domain $\Omega$ satisfying the
following conditions:
\begin{myindentpar}{1cm}
(i) $0 \in \p\Omega, \quad  \Omega \subset B_{1/k}^+, \quad |\Omega| \ge c_0$,\\
(ii) $u:\overline{\Omega}\rightarrow \R$ is convex, continuous satisfying 
$$u(0)=0, \quad \nabla u(0)=0, \quad \lambda \leq \det D^{2} u \leq \Lambda;$$\\
(iii) $$\p \Omega \cap \{u<1\} \subset G \subset  \{x_n \le \sigma \}$$
 where $G$ is a graph in the $e_n$ direction which is defined in $B_{2/k}$, and its $C^{1,1}$ norm is bounded by $\sigma$. \\ 
(iv) $$\frac \rho 4\abs{x-x_{0}}^2 \leq u(x)- u(x_{0})-\nabla u(x_{0}) (x- x_{0}) \leq \frac 4\rho \abs{x-x_{0}}^2 \quad \quad \forall x, x_{0}\in G\cap \p \Omega;$$\\
(v) If $r \le c_0$, 
$$|\nabla u| \le C_0 r|\log r|^2 \quad \quad \mbox{in} \quad \overline \Omega \cap B_r.$$
\end{myindentpar}
The constants $k$, $c_0$, $C_0$ above depend explicitly on $\rho$, $\lambda$, $\Lambda$,  and $n$.

We remark that the properties above imply that if $x_0 \in \p \Omega$ is close to the origin then
$$S_{x_0, \frac 1 2}(u) \subset \{u<1\},$$
and $u$ satisfies in $S_{x_0,\frac 12}(u)$ the hypotheses of the Localization Theorem at $x_0$ for some $\tilde \rho$ depending on the given constants. 

Lemma \ref{sep-lem} can be restated in the following way.
\begin{lem}
 Let $(u, \Omega)$ be as in Theorem \ref{main-reg}. Then, if $h \le c$, $$(u_{h},S_1(u_h))\in \mathcal D_{\sigma} \quad \mbox{with} ~ \sigma=C h^{1/2}.$$
\end{lem}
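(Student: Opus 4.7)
The plan is to verify each of the five defining properties (i)--(v) of $\mathcal{D}_\sigma$ for the pair $(u_h, S_1(u_h))$, pulling each verification directly from either the Localization Theorem \ref{main_loc} or Lemma \ref{sep-lem}. Before starting I would normalize by subtracting the affine tangent plane of $u$ at $0$; this preserves both the cofactor matrix $U$ and the linearized equation, so we may assume $u(0)=0$ and $\nabla u(0)=0$, which gives $u_h(0)=0$ and $\nabla u_h(0)=0$ for free.

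For (i), I would invoke the consequence of the Localization Theorem already recorded in the excerpt, namely
$$B_k\cap\overline{\Omega_h}\subset S_1(u_h)\subset B_{1/k}^+,$$
which simultaneously puts $0$ on $\p S_1(u_h)$, gives the containment $S_1(u_h)\subset B_{1/k}^+$, and yields a universal lower volume bound $|S_1(u_h)|\ge c_0$. Condition (ii) is automatic: convexity and continuity are preserved by the affine rescaling, and the Monge--Amp\`ere bounds $\lambda\le\det D^2u_h\le\Lambda$ persist because $T=h^{1/2}A_h^{-1}$ has $|\det T|=h^{n/2}$, matching the $a=h$ normalization in the rescaling.

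For (iii), Lemma \ref{sep-lem}(a) gives a $C^{1,1}$ graph representation of $\p\Omega_h\cap B_{2/k}$ in the $e_n$ direction with norm at most $Ch^{1/2}$, passing through the origin with horizontal tangent; integrating this bound from the origin forces $x_n\le Ch^{1/2}$ on the graph within $B_{2/k}$. Since $S_1(u_h)\subset B_{1/k}^+\subset B_{2/k}$, the boundary piece $\p S_1(u_h)\cap\{u_h<1\}=\p\Omega_h\cap S_1(u_h)$ is contained in this graph, which I take to be $G$; the choice $\sigma=Ch^{1/2}$ then satisfies both the graph--norm bound and the height bound. Property (iv) is exactly the quadratic separation proved in Lemma \ref{sep-lem}(b), restricted from $\p\Omega_h\cap B_{2/k}$ to $G\cap\p S_1(u_h)$, and property (v) is a direct transcription of Lemma \ref{sep-lem}(c).

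I do not anticipate any real obstacle: the content is essentially a bookkeeping repackaging of Lemma \ref{sep-lem} together with the Localization Theorem. The only mild subtlety is the identification of the \emph{domain} in $\mathcal{D}_\sigma$ with the section $S_1(u_h)$ rather than with the full rescaled domain $\Omega_h$; this is handled by the containment $S_1(u_h)\subset B_{1/k}^+\subset B_{2/k}$, which lets every estimate proved for $\p\Omega_h\cap B_{2/k}$ transfer verbatim to the appropriate portion of $\p S_1(u_h)$.
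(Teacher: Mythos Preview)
Your proposal is correct and matches the paper's approach exactly: the paper gives no separate proof, stating only that ``Lemma \ref{sep-lem} can be restated in the following way,'' and your argument is precisely the intended unpacking --- verifying (i)--(v) by combining the section inclusion $B_k\cap\overline{\Omega_h}\subset S_1(u_h)\subset B_{1/k}^+$ from the Localization Theorem with parts (a), (b), (c) of Lemma \ref{sep-lem}. Your identification of $\partial S_1(u_h)\cap\{u_h<1\}$ with the relevant piece of $\partial\Omega_h$ is the only nontrivial bookkeeping step, and you handle it correctly.
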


We first construct a useful subsolution.

\begin{lem}[Subsolution]
 Suppose $(u,\Omega)\in \mathcal D_{\delta}$. If $\delta \le c$ then the function
$$\underline w:= x_{n} -  u + \delta^\frac {1}{n-1} |x'|^2 + 
\frac{\Lambda^n}{\lambda^{n-1}\delta} x_{n}^2$$ satisfies 
 $$L_{u} (\underline w):=U^{ij} \underline w_{ij} \geq \delta^\frac{1}{n-1} \, \, tr \, U,$$
and on the boundary of the domain $D:= \{x_{n}\leq 2 \delta \}\cap \Omega$ we have
$$\underline w \leq 0~\text{on}~\p D\backslash  F_{\delta}, 
\quad \underline w \leq 1 ~ \mbox{on} ~  F_{\delta},$$ 
where 
\begin{equation}\label{csigma}
 F_\delta:= \{x_n=2 \delta, \quad |x'| \le \delta^\frac{1}{6(n-1)}\}.
\end{equation}

\label{key-lem}
\end{lem}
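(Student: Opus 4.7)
The plan is to verify the differential inequality by a direct cofactor calculation combined with AM-GM and Hadamard, then check the boundary bound case by case after choosing $\delta$ small enough to absorb lower-order terms.

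For the differential inequality I would compute $L_u\underline w$ term by term, using $L_u(x_n)=0$ (linearity), the standard cofactor identity $L_u(u)=U^{ij}u_{ij}=n\det D^2u=nf$, and $L_u(|x'|^2)=2(\mathrm{tr}\,U-U^{nn})$, $L_u(x_n^2)=2U^{nn}$. Substituting and subtracting $\delta^{1/(n-1)}\mathrm{tr}\,U$ gives
\[
L_u\underline w - \delta^{1/(n-1)}\mathrm{tr}\,U = -nf + \delta^{1/(n-1)}\sum_{i=1}^{n-1}U^{ii} + \Bigl(\tfrac{2\Lambda^n}{\lambda^{n-1}\delta}-\delta^{1/(n-1)}\Bigr)U^{nn},
\]
and for small $\delta$ the coefficient of $U^{nn}$ is at least $\Lambda^n/(\lambda^{n-1}\delta)$. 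The key step is then AM-GM on the $n$ positive summands $\delta^{1/(n-1)}U^{11},\dots,\delta^{1/(n-1)}U^{n-1,n-1},\Lambda^n(\lambda^{n-1}\delta)^{-1}U^{nn}$: the $\delta$-powers combine to $\delta^0$, giving the lower bound $n\Lambda\lambda^{-(n-1)/n}(\prod_i U^{ii})^{1/n}$. Hadamard's inequality for the positive definite matrix $U$ yields $\prod U^{ii}\ge\det U=f^{n-1}$, and $f\ge\lambda$, $f\le\Lambda$ then collapse the right-hand side to $n\Lambda\ge nf$. This is where the precise tuning of the coefficients $\delta^{1/(n-1)}$ and $\Lambda^n(\lambda^{n-1}\delta)^{-1}$ in the definition of $\underline w$ becomes visible.

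For the boundary estimates, I would split $\partial D$ into the lateral piece $\partial\Omega\cap\{x_n\le 2\delta\}$ and the top $\overline\Omega\cap\{x_n=2\delta\}$. On the lateral piece where $u\ge 1$, every other term of $\underline w$ is $O(\delta)$ (using $|x'|\le 1/k$ from property (i)), so $\underline w\le 2\delta-1+O(\delta)\le 0$. Where $u<1$, the point lies on $G$, so $x_n=g(x')$; since $\Omega\subset\{x_n\ge 0\}$ forces $g\ge 0$ and $g(0)=0$, we get $\nabla g(0)=0$, and the $C^{1,1}$-bound $\|g\|\le\delta$ gives $x_n\le(\delta/2)|x'|^2$. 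Combining with the quadratic lower bound $u(x)\ge(\rho/4)|x'|^2$ from property (iv) at the origin, all terms of $\underline w$ scale like $|x'|^2$ with an overall negative coefficient for small $\delta$, so $\underline w\le 0$. On the top inside $F_\delta$ I use only $u\ge 0$: all contributions are $O(\delta)$ plus $\delta^{1/(n-1)+1/(3(n-1))}$, and $\underline w\le 1$ follows trivially.

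The main obstacle is the top outside $F_\delta$, where $|x'|\ge\delta^{1/(6(n-1))}$ and one must beat the $O(\delta)$ positive contributions of $\underline w$ by a lower bound on $u$; convexity alone is insufficient. Here I would invoke the interior lower bound \eqref{lbd}, $u(x)\ge|x|^3$, which is available because in the class $\mathcal D_\sigma$ the pair $(u,\Omega)$ satisfies the hypotheses of the Localization Theorem at $0$. This forces $u\ge\delta^{1/(2(n-1))}$ on this region, and since $1/(2(n-1))<1$ this strictly dominates $2\delta$ and all other positive terms for small $\delta$, giving $\underline w\le 0$. The exponent $1/(6(n-1))$ in the definition of $F_\delta$ is chosen precisely so this dominance works; together with the AM-GM/Hadamard balance in the differential inequality, this is the delicate point of the construction.
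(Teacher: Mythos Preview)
Your proof is correct and follows essentially the same approach as the paper. The only cosmetic differences are that the paper packages the differential inequality via the matrix inequality $\mathrm{tr}(AB)\ge n(\det A\det B)^{1/n}$ applied to $A=U$, $B=D^2p$ (which, since $D^2p$ is diagonal, is exactly your AM--GM plus Hadamard), and that the paper folds your ``lateral, $u\ge 1$'' case into the far region $E_\delta=\partial D\cap\{|x|\ge\delta^{1/(6(n-1))}\}$ rather than treating it separately.
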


 \begin{proof}

Let
\begin{equation*}
 p(x) =\frac{1}{2} \left( \delta^{\frac{1}{n-1}} |x'|^2 + \frac{\Lambda^n}{\lambda^{n-1}\delta} x_{n}^2\right).
\end{equation*}
Then $$\det D^{2} p(x) = \frac{\Lambda^n}{\lambda^{n-1}}.$$
Using the matrix inequality
\begin{equation*}
 tr (AB) \geq n (\det A \det B)^{1/n}~\text{for}~ A, B~\text{symmetric}~\geq 0,
\end{equation*}
we get
\begin{equation*}
 L_u p=U^{ij} p_{ij} \geq n (\det (U) \det D^{2} p)^{1/n} = n ((\det D^{2}u)^{n-1}\frac{\Lambda^n}{\lambda^{n-1}})^{1/n}\geq n\Lambda.
\end{equation*}
Since $\delta$ is small $$D^2 p \ge \delta^\frac{1}{n-1}I,$$ hence
$$L_u p=U^{ij} p_{ij} \ge \delta^\frac{1}{n-1} \, \, tr \,U.$$
Using $L_u x_n=0$ and $$L_uu=U^{ij}u_{ij}=n \det D^2 u \le n \Lambda$$ we find
$$L_u \underline w =L_u(x_n- u + 2 p) \ge  \delta^\frac{1}{n-1} \, \, tr \, U.$$

Next we check the behavior of $\underline w$ on $\p D$. We decompose $\p D \subset G \cup E_{\delta} \cup F_\delta$ where
$$E_{\delta} := \p D \cap \{ |x| \ge \delta ^\frac{1}{6(n-1)}\}.$$

On $G \cap \p \Omega $, we use the properties of $\mathcal D_\delta$ and obtain $$u \ge (\rho/4)|x|^2, \quad \quad x_n \le \delta |x'|^2$$ which follows from the $C^{1,1}$ bound on the graph $G$.
Then
\begin{equation*}
 \underline w \leq (\delta + \delta^\frac{1}{n-1} + C \delta)|x'|^2- (\rho/4)|x|^2 \leq 0,
\end{equation*}
provided that $\delta$ is small.

On $E_\delta$ we use \eqref{lbd} and find
$$u \ge (\delta^\frac{1}{6(n-1)})^3=\delta^\frac{1}{2(n-1)}$$
hence, for small $\delta$,
$$w \le C\delta- \delta^\frac{1}{2(n-1)} + 
c \delta^\frac {1}{n-1} \le - \delta^\frac{1}{2(n-1)}/2< 0.$$

On $F_{\delta}$, the positive terms in $\underline w$ are bounded by $1/3$ for 
small $\delta$ and we obtain $w \le 1$.

\end{proof}

\begin{rem}
For any point $x_0\in \p \Omega$ close to the origin we can construct the corresponding subsolution
$$\underline w_{x_0}=z_n-u_{x_0} + 2 p (z) $$
where $$u_{x_0}:=u-u(x_0)-\nabla u(x_0) \cdot (x-x_0) $$
and with $z$ denoting the coordinates of the point $x$ in a system of coordinates centered at  $x_0$ with the $z_n$-axis perpendicular to $\p \Omega$. From the proof above we see that $\underline w_{x_0}$ satisfies the same conclusion of Lemma \ref{key-lem} if  $|x_0| \ll \delta$.   
\end{rem}

Next we show that $u$ has uniform modulus of convexity on the set $F_\delta$ introduced above (see \eqref{csigma}). 

\begin{lem} Let $(u, \Omega)\in \mathcal D_{\delta}$. If $\delta \le c$ then for any $y \in F_\delta$ 
we have $$S_{y,c \delta^2}(u) \subset \Omega.$$ 
\label{uni-convex}
\end{lem}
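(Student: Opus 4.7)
\emph{Proof plan.} I would estimate from below the largest level $h^\ast$ for which the section of $u$ centered at $y$ still fits inside $\Omega$, and then apply Proposition \ref{tan_sec} at the first boundary contact point to convert this into a bound in terms of $dist(y,\p\Omega)$.

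First, since $y\in F_\delta$ has $y_n=2\delta$ and $\p\Omega$ near the origin is a graph $G$ in the $e_n$-direction lying in $\{x_n\le \delta\}$ with $C^{1,1}$-norm bounded by $\delta$ (property (iii) of $\mathcal D_\delta$), an elementary geometric estimate gives $dist(y,\p\Omega)\ge c\delta$ for a universal $c>0$.

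Now set $h^\ast:=\sup\{h>0:S_{y,h}(u)\subset\Omega\}$. If $h^\ast$ is already bounded below by a universal constant there is nothing to prove (for $\delta$ small), so we may assume $h^\ast$ is as small as we like. By continuity the closed section $\overline{S_{y,h^\ast}(u)}$ touches $\p\Omega$ at some point $x_0$. Since $|y|$ is small and the diameter of $S_{y,h^\ast}$ tends to $0$ as $h^\ast\to 0$ (by the interior Caffarelli estimates applied to the rescaling of $u$ around $y$, using that $\det D^2u$ is pinched between $\lambda$ and $\Lambda$), the point $x_0$ is small as well, so the remark following property (v) of $\mathcal D_\delta$ guarantees that $u$ satisfies the hypotheses of the Localization Theorem \ref{main_loc} at $x_0$ for a universal $\tilde\rho$. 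Applying Proposition \ref{tan_sec} at $x_0$ (with $u$ replaced by $u-u(x_0)-\nabla u(x_0)(\cdot-x_0)$) to the section $S_{y,h^\ast}(u)\subset\Omega$, tangent to $\p\Omega$ at $x_0$, then yields
$$dist(y,\p\Omega)\le k_0^{-1}(h^\ast)^{1/2},$$
which combined with the earlier lower bound $dist(y,\p\Omega)\ge c\delta$ gives $h^\ast\ge c'\delta^2$, as desired.

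The main technical step is to control the diameter of $S_{y,h^\ast}$ for small $h^\ast$, in order to force the contact point $x_0$ into the universal neighborhood of the origin where the class $\mathcal D_\delta$ supplies the Localization-Theorem hypotheses at $x_0$; everything else follows mechanically from Proposition \ref{tan_sec}.
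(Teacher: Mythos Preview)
Your overall strategy matches the paper's exactly: take the maximal height $h^\ast$ with $S_{y,h^\ast}\subset\Omega$, pick $x_0\in\partial S_{y,h^\ast}\cap\partial\Omega$, check that the Localization-Theorem hypotheses hold at $x_0$, and invoke Proposition~\ref{tan_sec} to obtain $h^\ast\ge c\,d_{\partial\Omega}(y)^2\ge c\delta^2$.

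The only substantive difference is in how you force $x_0$ to lie near the origin, which you correctly flag as the main technical step. Your plan is to bound the diameter of $S_{y,h^\ast}$ via interior Caffarelli estimates, but those estimates only control the \emph{volume} of a section (equivalently, show it is comparable to some ellipsoid of volume $\sim (h^\ast)^{n/2}$); they do not rule out elongated sections, so a small $h^\ast$ does not by itself yield a uniformly small diameter over the class $\mathcal D_\delta$. The paper avoids this difficulty by exploiting the structure of $\mathcal D_\delta$ directly. Since $u_y(x_0)=h^\ast$ and both $u(y)$ and $|\nabla u(y)|$ are small (property~(v), using $|y|\le C\delta^{1/(6(n-1))}$), one gets $u(x_0)\le h^\ast+o_\delta(1)$; thus either $h^\ast$ is already bounded below by a universal constant, or $u(x_0)<1$, which by property~(iii) forces $x_0\in G$, and then the quadratic lower bound $u|_G\ge\tfrac{\rho}{4}|\cdot|^2$ from property~(iv) makes $|x_0|$ small. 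This is what the paper means by ``$S_{y,h_0}$ is balanced around $y$ and $u$ grows quadratically away from $0$ on $G$'', and it replaces your proposed diameter step with a direct two-line computation.
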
  

\begin{rem} From now on we fix the value of $\delta$ to be small, universal so that it satisfies the 
hypotheses of Lemma \ref{key-lem} and Lemma \ref{uni-convex}.
\end{rem}

\begin{rem} \label{bardelta}
Since the section $S_{y,c \delta^2/2}$ is contained in $\Omega \subset B_{1/k}$ and has volume bounded from below we can conclude that it contains a ball $B_{\bar \delta}(y)$ for some $\bar \delta \ll \delta$ small, universal.
\end{rem}

We sketch the proof of Lemma \ref{uni-convex} below.

\begin{proof}
Let $h_0$ be the maximal value of $h$ for which $S_{y,h} \subset \Omega,$ and let
$$x_0 \in \p S_{y,h_0} \cap \p \Omega.$$ 
Since $S_{y,h_0}$ is balanced around $y$ and $u$ grows quadratically away from $0$ on $G$ 
we see that the point $x_0$ lies also in a neighborhood of the origin. Now we can apply Proposition \ref{tan_sec} at $x_0$ and obtain $$h_0 \ge c d_{\p \Omega}(y)^2 \ge c \delta^2.$$ 

\end{proof}

A consequence of Lemma \ref{key-lem} is the following proposition.

\begin{prop}
Assume $(u,\Omega) \in \mathcal D_\delta$, and let $v \ge 0$ be  a nonnegative function satisfying
 $$  L_u v \leq \delta^\frac{1}{n-1}\,  tr \, U ~\text{in} ~\Omega,
\quad \quad v \geq 1~\text{in}~  F_\delta.$$
Then, 
$$ v \geq \frac 12 d_ G~\text{in}~ S_\theta.$$
for some small $\theta$ universal.
\label{key-thm}
\end{prop}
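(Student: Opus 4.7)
The plan is to combine the subsolution $\underline{w}$ from Lemma \ref{key-lem} with the comparison principle for $L_u$ on the strip $D := \{x_n \le 2\delta\} \cap \Omega$.

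First I would establish $v \ge \underline{w}$ in $D$. By Lemma \ref{key-lem}, $L_u \underline{w} \ge \delta^{1/(n-1)}\,\mathrm{tr}\, U$, while the hypothesis gives $L_u v \le \delta^{1/(n-1)}\,\mathrm{tr}\, U$, so $L_u(\underline{w} - v) \ge 0$ in $D$. On the boundary $\partial D$: on $F_\delta$ we have $\underline{w} \le 1 \le v$ by hypothesis; on $\partial D \setminus F_\delta$ we have $\underline{w} \le 0 \le v$ (using $v \ge 0$). The maximum principle for $L_u$ then yields $v \ge \underline{w}$ in $D$.

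Next I would show that $\underline{w}(x) \ge \tfrac{1}{2} d_G(x)$ on $S_\theta$ for a suitably small universal $\theta$. Since $G$ is a $C^{1,1}$ graph with norm at most $\delta$ passing through the origin with horizontal tangent, one has $|\gamma(x')| \le \tfrac{\delta}{2}|x'|^2$, and hence $d_G(x) \le x_n - \gamma(x') \le x_n + \tfrac{\delta}{2}|x'|^2$. Subtracting this from the formula for $\underline{w}$ gives
\[
\underline{w}(x) - \tfrac{1}{2} d_G(x) \;\ge\; \tfrac{x_n}{2} - u(x) + \bigl(\delta^{1/(n-1)} - \tfrac{\delta}{4}\bigr)|x'|^2 + \tfrac{\Lambda^n}{\lambda^{n-1}\delta}\,x_n^2.
\]
Since $\delta^{1/(n-1)} \ge \delta$ for $\delta \in (0,1]$ and $n \ge 2$, the coefficient of $|x'|^2$ is at least $3\delta/4$. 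The required nonnegativity then reduces to the inequality $u(x) \le \tfrac{x_n}{2} + \tfrac{3\delta}{4}|x'|^2 + \tfrac{\Lambda^n}{\lambda^{n-1}\delta}x_n^2$ on $S_\theta$, which I would verify using the Localization Theorem estimate $|u(x)| \le C|x|^2|\log|x||^2$ from Lemma \ref{U-bound-grad2} together with the size control $|x| \le C\theta^{1/2}|\log\theta|$ available on $S_\theta$: for $\theta$ small enough, the logarithmic factors are tamed and the positive terms of $\underline{w}$ dominate $u(x)$.

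Combining the two steps yields $v \ge \tfrac{1}{2} d_G$ on $S_\theta$. The main obstacle lies in the second step: verifying the pointwise inequality $u(x) \le \tfrac{x_n}{2} + \tfrac{3\delta}{4}|x'|^2 + \tfrac{\Lambda^n}{\lambda^{n-1}\delta}x_n^2$ throughout $S_\theta$ requires splitting into the regime where $x_n$ dominates $|x'|^2$ (where the linear term $x_n/2$ and the large-coefficient term $\tfrac{\Lambda^n}{\lambda^{n-1}\delta}x_n^2$ jointly absorb the quadratic-with-logarithm behavior of $u$) and the regime where $|x'|$ dominates (where the boundary quadratic separation $u(y_0)\le C|y_0|^2$ together with convexity of $u$ along the $e_n$ direction, plus the smallness of $\theta$, ensures the $\delta|x'|^2$ term suffices). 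The crucial feature is that the coefficients $\delta^{1/(n-1)}$ and $\Lambda^n/(\lambda^{n-1}\delta)$ in $\underline{w}$ from Lemma \ref{key-lem} are tailored precisely so that this balance can be achieved with $\theta$ universal.
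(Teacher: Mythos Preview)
Your first step (the comparison $v \ge \underline w$ on $D$) matches the paper's argument and is correct. The gap is in your second step: the pointwise inequality $\underline w \ge \tfrac12 d_G$ on $S_\theta$ is simply false.

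Take any point $x=(x',\gamma(x'))\in G\cap S_\theta$ with $x'\ne 0$. There $d_G(x)=0$, while Lemma \ref{key-lem} shows $\underline w(x)\le 0$; in fact the proof of that lemma gives $\underline w(x)\le (\delta+\delta^{1/(n-1)}+C\delta)|x'|^2-(\rho/4)|x|^2<0$ for small $\delta$. So $\underline w(x)<\tfrac12 d_G(x)$ on $G\setminus\{0\}$, and by continuity in a full neighborhood of $G$ inside $S_\theta$. Your reduction to ``$u(x)\le \tfrac{x_n}{2}+\tfrac{3\delta}{4}|x'|^2+\tfrac{\Lambda^n}{\lambda^{n-1}\delta}x_n^2$'' fails in exactly this regime: on $G$ one has $x_n\le \delta|x'|^2$ and $u\ge(\rho/4)|x'|^2$, so you would need $\rho/4\lesssim \delta$, contradicting the smallness of $\delta$ (indeed Lemma \ref{key-lem} forces $\delta^{1/(n-1)}<\rho/4$). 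Smallness of $\theta$ cannot help because both sides scale like $|x'|^2$.

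The paper avoids this by extracting from $v\ge \underline w$ only the one-dimensional information $v(0,x_n)\ge \tfrac12 x_n$ along the inner normal at the origin (there the $|x'|^2$ issue disappears and $u(0,x_n)\le Cx_n^2|\log x_n|^2\le x_n/2$). It then repeats the comparison with the translated subsolutions $\underline w_{x_0}$ from the Remark following Lemma \ref{key-lem} at every nearby boundary point $x_0$, obtaining $v\ge \tfrac12 d_G$ along each inner normal and hence in $\overline\Omega\cap B_c\supset S_\theta$. The missing idea in your argument is precisely this sliding of the barrier along $\partial\Omega$; a single barrier centered at the origin cannot control $v/d_G$ tangentially.
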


\begin{proof}
Lemma \ref{key-lem} and the maximum principle for the operator $L_u$ imply
$v \ge \underline w$ in $D$,
which gives $$v(0,x_n) \ge \frac 12 x_n \quad \mbox{for} ~  x_n \in [0,c].$$ 
The same argument can be repeated at points $x_0 \in \p \Omega$ if $|x_0|$ is sufficiently small, by comparing $u$ with the corresponding subsolution $\underline w_{x_0}$. We obtain
$$v \ge \frac 12 d_G \quad \mbox{in} \quad \overline \Omega \cap B_c,$$
and the lemma follows by choosing $\theta$ sufficiently small.
\end{proof}

\begin{prop}\label{iteration}
Let $(u,\Omega) \in \mathcal D_\sigma$, ($\sigma \le \delta$) 
and suppose $v$ satisfies in $\Omega$
$$L_u v=g, \quad \quad a \, d_G \le v \le b \, d_G,$$
for some $a,b \in [-1,1]$. There exists $c_1$ small, universal such that if
$$\max \{ \sigma,   \|g/ tr \, U \|_{L^\infty}\} \le c_1(b-a),$$
then
$$a' d_G \le v \le b' d_G \quad \mbox{in} \quad S_\theta,$$
for some $a'$, $b'$ that satisfy
$$ a \le a' \le b' \le b , \quad \quad b'-a' \le \eta (b-a),$$ with $\eta \in (0,1)$, universal, close to 1.
\end{prop}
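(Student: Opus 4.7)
The plan is to prove Proposition \ref{iteration} by an oscillation-decay iteration: use the weak Harnack inequality (Theorem \ref{osci-thm}) to convert a measure-theoretic excess into a uniform bound on $F_\delta$, then invoke the subsolution comparison of Proposition \ref{key-thm} to propagate this bound to $S_\theta$. I begin with a dichotomy. Set $w_+ := v - a d_G$ and $w_- := b d_G - v$; both are nonnegative on $\Omega$, and they sum to $(b-a)\, d_G$. Fix the reference point $y_0 := 2\delta e_n$, at which $d_G(y_0) \ge c\delta$ by the $C^{1,1}$-smallness of $G$, and consider the interior section $S_{y_0, c\delta^2}(u)$, which by Lemma \ref{uni-convex} is compactly contained in $\Omega$ and by Remark \ref{bardelta} contains the universal ball $B_{\bar\delta}(y_0)$. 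Since $d_G \sim \delta$ throughout this section, the pointwise identity $w_+ + w_- = (b-a) d_G$ forces one of $\{w_+ \ge c(b-a)\delta\}$, $\{w_- \ge c(b-a)\delta\}$ to occupy at least half the measure of $S_{y_0, c\delta^2}(u)$. By replacing $v$ with $-v$ and swapping $a \leftrightarrow b$ if necessary, I assume we are in the first case.

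The next step is to upgrade this measure bound to a pointwise bound on all of $F_\delta$ via weak Harnack. The difficulty is that $d_G$ is only Lipschitz and $L_u d_G$ is not defined classically, so $w_+$ is not directly an $L_u$-supersolution. I would substitute $d_G$ throughout by its $C^{1,1}$ graph surrogate $d(x) := x_n - \varphi(x')$, where $\varphi$ is the graph function defining $G$; this satisfies $c\, d \le d_G \le C\, d$ on $\Omega \cap B_{1/k}$ and $|L_u d| \le \sigma \, tr\, U$, the latter from $\|\varphi\|_{C^{1,1}} \le \sigma$. The hypotheses $\|g / tr\, U\|_\infty \le c_1(b-a)$ and $\sigma \le c_1(b-a)$ combine to give $|L_u(v - a\, d)| \le C c_1(b-a)\, tr\, U$, and after subtracting a small multiple of the Pogorelov-type quadratic $p$ from the proof of Lemma \ref{key-lem} (for which $L_u p \ge c\, tr\, U$), the corrected function becomes a genuine nonnegative $L_u$-supersolution. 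Theorem \ref{osci-thm} applied on $S_{y_0, c\delta^2}(u)$ then yields $w_+ \ge c(b-a)\delta$ on $S_{y_0, c\delta^2/2}(u)$. Centering the same construction at boundary points $x_0 \in \p \Omega$ near the origin---permitted by the translation-at-boundary features of the class $\mathcal D_\sigma$ noted after its definition and by the translated subsolutions $\underline w_{x_0}$ of the remark following Lemma \ref{key-lem}---one covers $F_\delta$ by a universal number of such sections and obtains $w_+ \ge c'(b-a)\delta$ on all of $F_\delta$.

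With $w_+ \ge c'(b-a)\delta$ on $F_\delta$ established, the normalized function $\bar v := w_+/[c'(b-a)\delta]$ is nonnegative, is $\ge 1$ on $F_\delta$, and once $c_1$ is sufficiently small satisfies $L_u \bar v \le \delta^{1/(n-1)}\, tr\, U$. Proposition \ref{key-thm} then gives $\bar v \ge \tfrac{1}{2} d_G$ on $S_\theta$, i.e., $v \ge (a + c_0(b-a)) d_G$ on $S_\theta$; taking $a' := a + c_0(b-a)$ and $b' := b$ delivers the oscillation decay with $\eta := 1 - c_0 \in (0,1)$. The principal obstacle throughout is the missing $C^2$ regularity of $d_G$, which would seemingly preclude a direct treatment of $v - a d_G$ as a sub/supersolution of $L_u$; the remedy is the systematic replacement by the $C^{1,1}$ surrogate $d = x_n - \varphi(x')$, with the error $L_u d$ absorbed into the right-hand-side perturbation. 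The smallness hypothesis $\sigma \le c_1(b-a)$ is calibrated precisely to keep this error at the scale of the $g$-term, ensuring that both the weak Harnack step and the subsolution step survive the modification unchanged.
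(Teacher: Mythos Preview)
Your overall strategy coincides with the paper's: form the nonnegative pair $w_\pm$ summing to $(b-a)d_G$, run a dichotomy at the reference section near $2\delta e_n$, use weak Harnack to get a uniform lower bound on $F_\delta$, then feed this into Proposition \ref{key-thm}. Your explicit surrogate $d=x_n-\varphi(x')$ is a legitimate but unnecessary detour; the paper simply uses $d_G$, which is $C^{1,1}$ on $\Omega$ with $\|D^2 d_G\|_\infty\le C\sigma$ because $G$ has curvature bounded by $\sigma$ and $\Omega\subset B_{1/k}$ sits well inside the tubular neighborhood of $G$. Two steps in your argument need repair.

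First, subtracting a small multiple of $p$ from $w_+$ does \emph{not} yield a nonnegative function: $w_+$ may vanish while $p>0$, so the corrected function fails the hypothesis of Theorem \ref{osci-thm}. The paper's fix is to \emph{add} the concave barrier $c_1(k^{-2}-|x|^2)$, which is nonnegative on $\Omega\subset B_{1/k}$ and satisfies $L_u\bigl(c_1(k^{-2}-|x|^2)\bigr)=-2c_1\,tr\,U$; this preserves nonnegativity while absorbing both the $g$-term and the $\sigma\,tr\,U$ error coming from $L_u d_G$.

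Second, your covering of $F_\delta$ invokes boundary centers $x_0\in\p\Omega$ and the translated subsolutions $\underline w_{x_0}$; those devices belong to the proof of Proposition \ref{key-thm}, not to the Harnack step. To spread the lower bound over all of $F_\delta$ one instead chains Theorem \ref{osci-thm} along \emph{interior} sections $S_{y,c\delta^2}(u)$ with $y$ ranging over $F_\delta$; Lemma \ref{uni-convex} guarantees each such section is compactly contained in $\Omega$, and a universal number of overlapping sections covers $F_\delta$.

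With these two corrections your proof is exactly the paper's.
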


\begin{proof}
We define the functions
\begin{equation*}
 v_1=\frac{v- a \,  d_ G }{b - a}, ~
 v_2=\frac{b \,  d_ G-v} {b - a}
 \end{equation*}
which are nonnegative. Since
\begin{equation*}
 v_1 + v_2 = d_G,
\end{equation*}
we might assume (see Remark \ref{bardelta}) that the function $v_1$ satisfies
\begin{equation*}
| \{v_1 \geq \frac{\delta}{2}\} \cap B_{\bar \delta}(2 \delta e_n)| \ge \frac 12 |B_{\bar \delta}(2 \delta e_n) |.
\label{posi-pt}
\end{equation*}
Next we apply Theorem \ref{osci-thm}, for the function 
$$\tilde v _1 :=v_1 + c_1 (k^{-2}-|x|^2).$$
Notice that $\tilde v_1 \ge v_1 \ge 0$ in $\Omega$ and 
$$L_u \tilde v_1 \le (g +  \sigma  \, \,  tr \, U)(b-a)^{-1} - 2 c_1 \, tr U \le 0.$$ 

Using Lemma \ref{uni-convex} we can apply weak Harnack inequality Theorem \ref{osci-thm} a finite number of times and obtain
$$\tilde v_1 \ge 2c_2 >0  \quad \mbox{on} \quad F_\delta,$$
for some universal $c_2$. By choosing $c_1$ sufficiently small we find
$$ v_1 \ge c_2 \quad \mbox{on} \quad F_\delta.$$

Now we can apply Proposition \ref{key-thm} to $v_1/c_2$ since
$$ L_u(v_1/c_2) \le  2(c_1/c_2) \, tr U \le \delta^\frac{1}{n-1} \, tr \, U,$$
provided that $c_1$ is small. We obtain
$$v_1 \ge (c_2/2) \, d_G  \quad \mbox{in} ~S_\theta,$$
hence 
$$ v \ge a' d_G, \quad a'=a+ c_2(b-a)/2.$$

\end{proof}

\section{Proof of Theorem  \ref{main-reg} }\label{proof-sec}

Throughout this section we assume that $u$, $v$ satisfy the hypotheses of 
Theorem \ref{main-reg} and we also assume for simplicity that
 $$u(0)=0, \quad \nabla u(0)=0.$$ 
Our boundary gradient estimate states as follows.
\begin{prop} Let $v$ be as in Theorem \ref{main-reg}. Then, in $\Omega\cap B_{\rho/2}$, we have
\begin{equation*}
 \abs{v(x)} \leq C (\|v\|_{L^{\infty}(\Omega\cap B_{\rho})} +\|g/ \, tr \, U\|_{L^\infty(\Omega\cap B_{\rho})})  d_{\p\Omega}(x).
\end{equation*}
\label{bdr-grad}
\end{prop}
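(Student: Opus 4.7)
The plan is to establish the linear boundary decay $|v|\le C(M+M_g)d_{\p\Omega}$ via a Hopf-type barrier in the rescaled setting, combining the subsolution of Lemma \ref{key-lem} with a linear term in $x_n$ (which is annihilated by $L_u$) to build a supersolution with the correct boundary behavior. Set $M:=\norm{v}_{L^\infty(\Omega\cap B_\rho)}$ and $M_g:=\|g/\,tr\,U\|_{L^\infty(\Omega\cap B_\rho)}$; by homogeneity, normalize $M+M_g=1$. By a finite covering of $\p\Omega\cap B_{\rho/2}$, it suffices to fix $x_0\in\p\Omega\cap B_{\rho/2}$ and show $\abs{v(X)}\le C\,d_{\p\Omega}(X)$ in a universal neighborhood of $x_0$; translating and subtracting a linear function, take $x_0=0$, $u(0)=0$, $\nabla u(0)=0$. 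For a small universal $h_0$ with $Ch_0^{1/2}\le\delta$, Lemma \ref{sep-lem} gives $(u_{h_0},S_1(u_{h_0}))\in\mathcal D_\delta$; the rescaled $v_{h_0}$ solves $L_{u_{h_0}}v_{h_0}=g_{h_0}$, vanishes on the graph piece $\p\Omega_{h_0}\cap\set{u_{h_0}<1}$, and satisfies $N:=\norm{v_{h_0}}_\infty\le h_0^{-1/2}$ and $M'_g:=\|g_{h_0}/tr U_{h_0}\|_\infty\le Ch_0^{1/2}\abs{\log h_0}^2$ by \eqref{gh}.

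Let $\underline w$ be the subsolution of Lemma \ref{key-lem} for $(u_{h_0},S_1(u_{h_0}))$, and set $B:=\max(2N,\,M'_g/\delta^{1/(n-1)})$. On the slab $D:=\set{x_n\le 2\delta}\cap S_1(u_{h_0})$ define the barrier
\[
\phi(x):=\frac{N+B}{2\delta}\,x_n-B\,\underline w(x).
\]
Since $L_{u_{h_0}}x_n=0$ and $L_{u_{h_0}}\underline w\ge\delta^{1/(n-1)}\,tr U_{h_0}$, we obtain $L_{u_{h_0}}\phi\le -B\delta^{1/(n-1)}\,tr U_{h_0}\le -\abs{g_{h_0}}$, so $\phi$ is a supersolution for both $\pm v_{h_0}$. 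Check $\phi\ge\abs{v_{h_0}}$ on $\p D$: on the graph $\p\Omega_{h_0}\cap D$ (where $v_{h_0}=0$), $\phi\ge 0$ since $x_n\ge 0$ and $\underline w\le 0$; on the outer section boundary $\set{u_{h_0}=1}\cap\set{x_n\le 2\delta}$, a direct calculation from $\underline w=x_n-1+\delta^{1/(n-1)}|x'|^2+(C/\delta)x_n^2$ gives $\underline w\le -1/2$ for $\delta$ small, so $\phi\ge B/2\ge N\ge\abs{v_{h_0}}$ (using $B\ge 2N$); on the top slab $\set{x_n=2\delta}\cap S_1(u_{h_0})$, $\phi\ge(N+B)-B=N$ on $F_\delta$ (where $\underline w\le 1$) and $\phi\ge N+B+Bc$ on $E_\delta$ (where $\underline w\le -c(\delta)$). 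The maximum principle yields $\abs{v_{h_0}}\le\phi$ in $D$. On the axis, Lemma \ref{U-bound-grad2} gives $\abs{u_{h_0}(0,x_n)}\le Cx_n^2\abs{\log x_n}^2$, whence $\underline w(0,x_n)\ge x_n/2$ and $\phi(0,x_n)\le(N+B)x_n/(2\delta)=C(N+M'_g)x_n$. Repeating at each boundary point $x_0'$ in a universal neighborhood of $0$ using the translated subsolution $\underline w_{x_0'}$ from the Remark after Lemma \ref{key-lem}, we obtain $\abs{v_{h_0}(x)}\le C(N+M'_g)\,d_{\p\Omega_{h_0}}(x)$ in a universal neighborhood of $0$.

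Finally, unrescale using $v(X)=h_0^{1/2}v_{h_0}(x)$ and $d_{\p\Omega_{h_0}}(x)\le 2h_0^{-1/2}d_{\p\Omega}(X)$ (Proposition \ref{bdr-distort}), combined with $N\le h_0^{-1/2}M$ and $M'_g\le Ch_0^{1/2}\abs{\log h_0}^2M_g$; since $h_0$ is a universal constant, this yields $\abs{v(X)}\le C_0(M+M_g)\,d_{\p\Omega}(X)$ in a universal neighborhood of $x_0=0$ with $C_0$ universal, and a covering of $\p\Omega\cap B_{\rho/2}$ completes the proof. The main difficulty is the barrier design: a pure ``$\underline w$-barrier" fails to dominate $v_{h_0}$ on the outer section boundary $\set{u_{h_0}=1}$ unless the multiplier $B$ is large, while a pure ``$x_n$-barrier" cannot absorb the source term $g_{h_0}$. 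The resolution is to take $B$ large enough for both purposes---large enough to absorb $g_{h_0}$ (via $B\delta^{1/(n-1)}\ge M'_g$) and large enough to dominate $\abs{v_{h_0}}$ on the outer boundary (via $B\ge 2N$, using $\underline w\le -1/2$ there)---and then pair $B\underline w$ with the $L_u$-harmonic linear term $(N+B)x_n/(2\delta)$, which provides the required linear decay on the axis near the origin.
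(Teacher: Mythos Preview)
Your proof is correct, but it takes a more circuitous route than the paper's. The paper avoids the rescaling entirely: it constructs a clean supersolution
\[
\overline w := M x_n + u - \tilde\delta |x'|^2 - \frac{\Lambda^n}{(\lambda\tilde\delta)^{n-1}}x_n^2
\]
directly in the original domain $\Omega \cap B_\rho$ (Lemma~\ref{key-lem_sup}), checks $\overline w \ge 0$ on $\partial\Omega$ and $\overline w \ge \tilde\delta$ on the outer piece $\partial B_\rho \cap \Omega$, and applies the comparison principle once. Your barrier $\phi = \frac{N+B}{2\delta}x_n - B\underline w$, when expanded, has exactly the same structure---a large multiple of $x_n$, plus a multiple of $u$, minus small $|x'|^2$ and large $x_n^2$ terms---so you are essentially rediscovering $\overline w$ through the detour of rescaling into $\mathcal D_\delta$ and recycling the subsolution $\underline w$ with a sign flip. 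The cost of your route is the extra bookkeeping: the rescaling and unrescaling via Proposition~\ref{bdr-distort}, and the need to handle the additional boundary component $\{u_{h_0}=1\}\cap\{x_n\le 2\delta\}$ of the slab $D$ (where you verify $\underline w \le -1/2$). The paper's domain $\Omega\cap B_\rho$ has the simpler boundary $\partial\Omega \cup (\partial B_\rho \cap \Omega)$, and no rescaling is needed since $u$ already separates quadratically on $\partial\Omega \cap B_\rho$ by hypothesis. Your approach does have the minor aesthetic benefit of reusing Lemma~\ref{key-lem} rather than stating a parallel supersolution lemma, but the paper's direct construction is shorter and more transparent.
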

The proposition follows easily from the construction of a suitable supersolution.
\begin{lem}[Supersolution] 
There exists universal constants $M$ large, and $\tilde \delta$ small such that the function
$$\overline w:= M x_{n} +  u - \tilde \delta |x'|^2 - \frac{\Lambda^n}{( \lambda \tilde \delta)^{n-1}} 
 x_{n}^2$$ satisfies 
 $$L_{u} (\overline w) \leq - \tilde \delta\, \, tr \, U,$$
and
$$\overline w \ge 0 ~\text{on}~ \p (\Omega \cap B_\rho), \quad \overline w \ge \tilde \delta ~\text{on}~  \p (\Omega \cap B_\rho) \setminus B_{\rho/2}.$$

\label{key-lem_sup}
\end{lem}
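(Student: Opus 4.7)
The construction mirrors that of Lemma \ref{key-lem}, with signs flipped. I write $\overline w = Mx_n + u - q$, where
$$q(x) := \tilde\delta |x'|^2 + \frac{\Lambda^n}{(\lambda\tilde\delta)^{n-1}} x_n^2.$$
Using $L_u x_n = 0$ and $L_u u = n\det D^2 u \le n\Lambda$, the interior inequality $L_u \overline w \le -\tilde\delta \, tr \, U$ reduces to the lower bound $L_u q \ge n\Lambda + \tilde\delta \, tr \, U$. The Hessian $D^2 q$ is diagonal with $n-1$ eigenvalues equal to $2\tilde\delta$ and last eigenvalue $\frac{2\Lambda^n}{(\lambda\tilde\delta)^{n-1}}$, so $\det D^2 q = 2^n \Lambda^n / \lambda^{n-1}$. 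The same trace--determinant inequality used in Lemma \ref{key-lem} gives $L_u q \ge n(\det U \det D^2 q)^{1/n} \ge 2n\Lambda$; independently, since $\tilde\delta$ is small we have $D^2 q \ge 2\tilde\delta I$, whence $L_u q \ge 2\tilde\delta \, tr \, U$. Averaging these two lower bounds yields the desired $L_u q \ge n\Lambda + \tilde\delta \, tr \, U$.

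For the boundary conditions I first fix $M$ large and universal. Since $x_n \le \rho$ on $\overline{\Omega \cap B_\rho}$, the estimate $x_n^2 \le \rho x_n$ combined with $M \ge \rho \cdot \Lambda^n / (\lambda\tilde\delta)^{n-1}$ gives $Mx_n - \frac{\Lambda^n}{(\lambda\tilde\delta)^{n-1}} x_n^2 \ge 0$, so $\overline w \ge u - \tilde\delta |x'|^2$ throughout. On $\partial\Omega \cap \overline{B_\rho}$, the quadratic separation \eqref{eq_u1} applied at $x_0 = 0$ gives $u(x) \ge \rho |x|^2$, whence $\overline w \ge (\rho - \tilde\delta)|x|^2 \ge (\rho/2)|x|^2$ provided $\tilde\delta \le \rho/2$. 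This yields $\overline w \ge \rho^3/8 \ge \tilde\delta$ when $|x| \ge \rho/2$, and $\overline w \ge 0$ otherwise, as required on this portion of the boundary.

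The remaining piece, $\overline\Omega \cap \partial B_\rho$, is the main obstacle: here a point $x$ with $|x| = \rho$ may sit close to $\partial\Omega$ but away from the origin, so the quadratic separation at $0$ does not directly give a positive lower bound on $u$. I handle it with the Localization Theorem \ref{main_loc}, which under our hypotheses at $0$ yields $S_h \subset B^+_{Ch^{1/2}|\log h|}$ for all $h \le k$. Consequently there is a universal $h_0 > 0$ with $S_{h_0} \subset B_{\rho/2}$, so $u \ge h_0$ on $\overline\Omega \setminus B_{\rho/2}$. On $\overline\Omega \cap \partial B_\rho$ this gives $\overline w \ge h_0 - \tilde\delta \rho^2 \ge \tilde\delta$ once $\tilde\delta$ is taken sufficiently small, depending only on $h_0$ and $\rho$. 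Fixing $\tilde\delta$ as the minimum of all the smallness constraints above (all universal) and $M$ as in Step 2 completes the construction.
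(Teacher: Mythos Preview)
Your proof is correct and follows essentially the same approach as the paper: the interior inequality is obtained by combining the trace--determinant bound and the eigenvalue bound on $D^2 q$ exactly as in Lemma~\ref{key-lem}, and the boundary behavior is handled by choosing $M$ so that $Mx_n$ dominates the $x_n^2$ term and then using the lower bound on $u$ away from $B_{\rho/2}$ coming from the Localization Theorem (the paper packages this last step as an appeal to \eqref{lbd}). The only cosmetic differences are your explicit splitting of $\p(\Omega\cap B_\rho)$ into its two pieces and your omission of the factor $\tfrac12$ in the definition of $q$, neither of which affects the argument.
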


\begin{proof}
We first choose $\tilde \delta \le \rho$ small such that $$u- \tilde \delta |x'|^2 \ge \tilde \delta \quad \mbox{on} ~\overline \Omega \setminus B_{\rho/2}.$$
The existence of $\tilde \delta$ follows for example from \eqref{lbd}.
We choose  $M$ such that
$$ M x_n - \frac{\Lambda^n}{( \lambda \tilde \delta)^{n-1}} 
 x_{n}^2  \ge 0  \quad \quad \text{on}~\overline \Omega.$$
Then on $\p \Omega$, $$\overline w \ge u -  \tilde \delta  |x'|^2 \ge 0,$$
and we obtain the desired inequalities for $\overline w$ on $\p \Omega$.
  
If we denote $$q(x):=\frac 12 \left( \tilde \delta  |x'|^2 + \frac{\Lambda^n}{( \lambda \tilde \delta)^{n-1}} 
 x_{n}^2 \right),$$
then $$\det D^2 q=\frac{\Lambda^n}{\lambda^{n-1}}, \quad D^2q \ge \tilde \delta I$$
and we obtain as in Lemma \ref{key-lem} $$L_u \overline w \le - \tilde \delta \, \,  tr \, U.$$

\end{proof}

\begin{proof}[Proof of Proposition \ref{bdr-grad}]
By dividing the equation by a suitable constant we may suppose that
$$\|v\|_{L^\infty} \le \tilde \delta, \quad \quad \|g/ \, tr \, U\|_{L^\infty} \le  \tilde \delta,$$
and we need to show that $$|v| \le C d_{\p \Omega} \quad \mbox{in}~\p \Omega \cap B_{\rho /2}.$$

Since $v \le \overline w$ on $\p (\Omega \cap B_\rho)$ and $ L_u v \geq L_u \overline w$ we obtain $v \le \overline w$ in $ \Omega \cap B_\rho$ hence $$v(0,x_n) \le Cx_n, \quad \quad\mbox{if} ~ x_n \in [0, \rho /2].$$  

The same argument applies at all points $x_0 \in \p \Omega \cap B_{\rho/2}$ and we obtain
the upper bound for $v$. The lower bound follows similarly and the proposition is proved.
 
 \end{proof}

\begin{proof}[Proof of Theorem \ref{main-reg}]

By dividing by a suitable constant we may suppose that 
$$\|v\|_{L^\infty} + \|g / \,  tr \, U \|_{L^\infty}$$
is sufficiently small such that, by Proposition \ref{bdr-grad},
$$ |v| \le \frac{1}{2}d_{\p \Omega} \quad \mbox{in} ~ \overline \Omega \cap B_{\rho /2}.$$

We focus our attention on the sections at the origin and we show that 
we can improve these bounds in the form 
\begin{equation}
a_{h} d_{\p\Omega} \leq v\leq  b_{h} d_{\p\Omega}, ~ \text{in} ~ S_{h},
\label{Sh-ineq}
\end{equation}
for appropriate constants $a_h$, $b_h$. First we fix $h_0$ small universal and let $$a_{h_0}=-1/2, \quad b_{h_0}=1/2.$$
Then we show by induction that for all $$h=h_0 \theta^k, \quad \quad k \ge 0,$$ we can find $a_h$ increasing  and $b_h$ decreasing with $k$ such that \eqref{Sh-ineq} holds and
\begin{equation}\label{b-a}
b_h-a_h=\left (\frac{1+\eta}{2}\right)^k  \ge C_1 h^{1/2} |\log h|^2.
\end{equation}
for some large universal constant $C_1$.
We notice that this statement holds for $k=0$ if $h_0$ is chosen sufficiently small.

Assume the statement holds for $k$. Proposition \ref{bdr-distort} implies that
\begin{equation*}
 \overline{a}_h d_{\p \Omega_{h}} \leq v_{h}  \leq \overline{b}_h d_{\p \Omega_{h}} \quad  ~\text{in} ~S_{1} (u_{h}) 
\end{equation*}
with
\begin{equation*}
|\overline{a}_h - a_{h}| \le C h^{1/2}\abs{\log h}^{2},~
 |\overline{b}_h-b| \le C h^{1/2}\abs{\log h}^{2}.
\end{equation*}
Since $$(u_h, S_1(u_h) ) \in \mathcal D_\sigma, \quad \text{for}~ \sigma=C h^{1/2},  $$
and (see \eqref{sol-trans}, \eqref{gh})
$$L_{u_h}v_h=g_h,$$ 
 \begin{align*}
 \|g_h/tr U_h\|_{L^\infty} &\le C  h^{1/2} |\log h|^2 \|g/tr U\|_{L^\infty}\\
 & \le Ch^{1/2} |\log h |^2\\
 &\le c_1(\overline b_h -\overline a_h),
 \end{align*}
we can apply Proposition \ref{iteration} and conclude
\begin{equation*}
 \overline{a}_{\theta h} d_{\p \Omega_{h}} \leq v_{h} (x)
  \leq \overline b_{\theta h} d_{\p \Omega_{h}}, ~\text{in}~ S_{\theta} (u_{h}). 
\end{equation*}
with
\begin{equation*}
 \overline b_{\theta h}-  \overline a_{\theta h} \leq \eta ( \overline{b} _h- \overline{a}_h).
\end{equation*}
Rescaling back to $S_{\theta h}$, and using Proposition \ref{bdr-distort} again, we obtain
\begin{equation}
 a_{\theta h} d_{\p\Omega} \leq v \leq  b_{\theta h} d_{\p\Omega}, ~  \text{in}~ S_{\theta h}(u)
\label{Stheta-ineq}
\end{equation}
where
  $$b_{\theta h}-a_{\theta h} \le \eta (b_h-a_h) + C h^{1/2} |\log h| \le (1+\eta)/2 (b_h-a_h)  .$$
By possibly modifying their values we may take $a_{\theta h}$, $b_{\theta h}$ such that
$$ a_h \le a_{\theta h} \le b_{\theta h} \le b, \quad \quad b_{\theta h}-a_{\theta h} =  \frac{1+\eta}{2} (b_h-a_h).$$

From \eqref{Sh-ineq}, \eqref{b-a} we find
$$osc_{S_h} v \le C h^{1/2 + \alpha}, $$
for some small $\alpha$ universal. Using \eqref{small-sec} we obtain
$$osc_{B_r} v \le C r^{1+\alpha} \quad \mbox{if}~r \le c,$$
and the theorem is proved.
\end{proof}

\section{Proof of Theorem \ref{main-reg-gl2}}

In this last section we prove Propositions \ref{u_reg} and \ref{tan_sec} and Theorem \ref{main-reg-gl2}.

\begin{proof}[Proof of Proposition \ref{u_reg}]
Let $y\in \Omega $ with $$r:=d_{\p \Omega}(y) \le c,$$ and consider the maximal section $S_{\bar{h},y}$ centered at $y$, i.e.,
$$\bar{h}=max\{h\,| \quad S_{y,h}\subset \Omega\}.$$
By Proposition \ref{tan_sec} applied at the point $$x_0\in \p S_{y,\bar h} \cap \p \Omega,$$ we have
 $$\bar h^{1/2} \sim r, \quad |\nabla u(y)-\nabla u(x_0)| \le C \bar h^{1/2},$$
and $S_{\bar h,y}$ is equivalent to an ellipsoid $E$ i.e
$$c\tilde E \subset S_{\bar h,y}-y \subset C\tilde E,$$
where
$$E :=\bar h^{1/2}A_{\bar{h}}^{-1}B_1, \quad \mbox{with} \quad \|A_{\bar{h}}\|, \|A_{\bar h}^{-1} \| \le C |\log \bar h|.$$
We denote $$u_y:=u-u(y)-\nabla u(y) (x-y).$$
The rescaling $\tilde u: \tilde S_1 \to \R$ of $u$ 
$$\tilde u(\tilde x):=\frac {1}{ \bar h} u_y(T \tilde x) \quad \quad x=T\tilde x:=y+\bar h^{1/2}A_{\bar{h}}^{-1}\tilde x,$$
satisfies
$$\det D^2\tilde u(\tilde x)=\tilde f(\tilde x):=f(T \tilde x),  $$
and
$$B_c \subset \tilde S_1 \subset B_C, \quad \quad \tilde S_1=\bar h^{-1/2} A_{\bar h}(S_{\bar h,y}- y),$$
where $\tilde S_1$ represents the section of $\tilde u$ at the origin at height 1.

The interior $C^{1,\gamma}$ estimate for solutions of the Monge-Ampere equation (see \cite{C1}) gives
$$|\nabla \tilde u(\tilde z_1)-\nabla \tilde u(\tilde z_2)| \le C |\tilde z_1-\tilde z_2|^{\gamma} \quad \forall \tilde z_1,\tilde z_2 \in \tilde S_{1/2}  $$
for some $\gamma \in (0,1)$, $C$ universal. Rescaling back and using
$$\nabla \tilde u(\tilde z_1)-\nabla \tilde u(\tilde z_2)=\bar h^{-1/2} (A_{\bar h}^{-1})^T (\nabla u( z_1)-\nabla  u(z_2)), \quad \quad \tilde z_1-\tilde z_2=\bar h^{-1/2}A_{\bar h}(z_1-z_2)$$
we find
$$|\nabla u(z_1)-\nabla u( z_2)|  \le  |z_1-z_2|^\gamma \quad \forall  z_1, z_2 \in  S_{\bar h/2,y} .$$
Notice that this inequality holds also in the Euclidean ball $B_{r^2}(y)\subset S_{\bar h/2,y}$. Also, if  $y_0 \in \p \Omega$ denotes the closest point to $y$ on $\p \Omega$ i.e $|y-y_0|=r$, by  Lemma \ref{U-bound-grad2}, we find
$$|\nabla u(y)-\nabla u(y_0)| \le |\nabla u(y)-\nabla u(x_0)|+|\nabla u(x_0)-\nabla u(y_0)| \le r^{1/2}.   $$ 
These oscillation properties for $\nabla u$ and Lemma \ref{U-bound-grad2} easily imply that $$[\nabla u]_{C^\alpha(\bar \Omega)} \le C,$$ for some $\alpha\in (0,1)$, $C$ universal.

If we assume that $f \in C^\beta (\overline \Omega)$ then $$\|\tilde f\|_{C^{\beta} (\tilde{S}_{1})}\le \|f\|_{C^\beta (\overline \Omega)},$$
and the interior $C^{2,\beta}$ estimates  
for $\tilde{u}$ in $\tilde{S}_{1}$ (see \cite{C2}) give 
\begin{equation}\label{d2_bd}
\|D^2 \tilde u\|_{C^\beta(\tilde S_{1/2})} \le K.
\end{equation}
 In particular $$\|D^2 u(y)\|=\|A_{\bar h}^T D^2 \tilde u(0) A_{\bar h}  \| \le K |\log h|^2 \le K |\log r|^2,$$
 where by $K$ we denote various constants depending on $\beta$, $\|f\|_{C^\beta (\overline \Omega)} $ and the universal constants.
 
 \end{proof}
 
 \begin{proof}[Proof of Theorem \ref{main-reg-gl2}]
 
 We use the same notations as in the proof of Proposition \ref{u_reg}. 
 
 After multiplying $v$ by a suitable constant we may assume that $$\|\varphi\|_{C^{1,1}}+\|g/ \, tr \, U\|_{L^\infty}=1.$$
 We define also the rescaling $\tilde v$ for $v$
 $$\tilde v(x):=\bar h^{-1/2} v(Tx).$$
  From Theorem \ref{main-reg-gl} we obtain $$\max_{S_{\bar h, y}} | v-v(x_0)-\nabla v(x_0)(x-x_0)| \le C \, r^{1+\alpha'}$$
 for some universal $\alpha'\in (0,1)$ and $C$, hence $$\max_{\tilde S_1} |\tilde v(\tilde x) -\tilde v(\tilde x_0)-\nabla \tilde v(\tilde x_0)(\tilde x -\tilde x_0)| \le C r^{1+\alpha '}\bar h^{-1/2} \le C r^{\alpha '} .$$
 Using the computations in Section 4 we see that $\tilde v$ solves
 $$\tilde U^{ij}\tilde v_{ij}= \tilde g(x):=\bar h^{1/2} g(Tx) ,$$
 with
 $$ \| \tilde g(x)/ tr \, \tilde U \|_{L^\infty(\tilde S_{1/2})} \le C h^{1/2}|\log h|^2 \|g/ \, tr \, U\|_{L^\infty}\le C r^{\alpha '}.$$

 Since \eqref{d2_bd} holds, we can apply Schauder estimates and find that for any $\tilde z_1, \tilde z_2\in \tilde S_{1/4}$
 $$|\nabla \tilde v(0)-\nabla \tilde v(\tilde x_0)|\le K r^{\alpha'},\quad \quad |\nabla \tilde v(\tilde z_1)-\nabla \tilde v(\tilde z_2)| \le K r^{\alpha'}|\tilde z_1-\tilde z_2|^{\alpha'/2} .$$
 Using that $\nabla \tilde v(\tilde z_i)=(A_{\bar h}^{-1})^T \nabla v(z_i)$ 
 we obtain
 $$ |\nabla v(y)-\nabla v(x_0)|\le K r^{\alpha'/2}, \quad |\nabla v(z_1)-\nabla v(z_2)|\le K|z_1-z_2|^{\alpha'/2}$$ for any $z_1,z_2\in B_{r^2}(y)$. These inequalities and Theorem \ref{main-reg-gl}  give as in the proof of Proposition \ref{u_reg} above the desired $C^{1,\alpha}$ bound for $v$.
  
 \end{proof}

\begin{rem}
Theorem \ref{main-reg-gl2} still holds if we only assume that $f \in C(\overline \Omega)$. In this case one needs to apply the interior $C^{1,\alpha}$ estimates for the linearized Monge-Ampere equation obtained by Guti\'errez and Nguyen  \cite{GN}. 
\end{rem}

We conclude the paper with a sketch of the proof of Proposition \ref{tan_sec}.

\begin{proof}[Proof of Proposition \ref{tan_sec}]
Assume that the hypotheses of the Localization Theorem \ref{main_loc} hold at the origin. For $a\ge 0$ we denote
$$S_a':=\{ x \in \overline \Omega| \quad u(x)<ax_n\} ,$$
and clearly $S_{a_1}'\subset S_{a_2}'$ if $a_1 \le a_2$.
The proposition easily follows once we show that $S_{ch^{1/2}}'$ has the shape of the ellipsoid $E_h$ for all small $h$.

From Theorem \ref{main_loc} we know $$S_h:=\{u<h\} \subset k^{-1} E_h \subset \{x_n \le k^{-1} h^{1/2} \} $$ and since $u(0)=0$ we use the convexity of $u$ and obtain
\begin{equation}\label{f_sub}
S_{kh^{1/2}}' \subset S_h \cap \Omega.
\end{equation}
This inclusion shows that in order to prove that $S_{kh^{1/2}}'$ is equivalent to $E_h$ it suffices to bound its volume by below
$$|S_{kh^{1/2}}'| \ge c|E_h|.$$

From Theorem \ref{main_loc}, there exists $y \in \partial S_{\theta h}$ such that $y_n \ge k(\theta h)^{1/2}$. 
We evaluate $$\tilde u:=u-k h^{1/2}x_n, $$ at $y$ and find $$\tilde u(y) \le \theta h - k h^{1/2} k (\theta h)^{1/2} \le -\delta h,$$ for some $\delta>0$ provided that we choose $\theta$ small depending on $k$. 
Since $\tilde u=0$ on $\p S_{kh^{1/2}}'$ and $$ \det D^2 \tilde u \ge \lambda$$ we have 
$$|\inf \tilde u| \le C(\lambda)|S_{kh^{1/2}}'|^{2/n},$$ hence $$ c h^{n/2} \le |S_{kh^{1/2}}'|.$$

\end{proof}

\end{document}